\theoremstyle{plain}
\newtheorem*{mainthm}{Main Theorem}
\newtheorem{thm}{Theorem}[section]
\newtheorem{prop}[thm]{Proposition}
\newtheorem*{conj*}{Conjecture}
\newtheorem{claim}[thm]{Claim}
\newtheorem{lemma}[thm]{Lemma}
\newtheorem*{cor*}{Corollary}
\theoremstyle{definition}
\newtheorem{notation}[thm]{Notation}
\newtheorem{defn}[thm]{Definition}
\newtheorem{stassumption}[thm]{Standing Assumption}
\theoremstyle{remark}
\newtheorem{rmk}[thm]{Remark}
\newtheorem*{rmk*}{Remark}
\newlength{\plarg}
\newcommand{\abs}[1]{\left|#1\right|}
\newcommand{\norm}[1]{\left\lVert#1\right\rVert}
\newcommand{\ab}{\mathrm{ab}}
\newcommand{\lepair}{\le}
\newcommand{\coind}[1]{\left\lfloor\!\left\lfloor #1 \right\rfloor\!\right\rfloor} 
\numberwithin{equation}{section}
\title[Homological torsion growth in graphs of free groups]{Homological torsion growth in non-normal chains of graphs of free groups}
\author{Dario Ascari and Jonathan Fruchter}
\begin{document}

\begin{abstract}
    Let $G$ be a hyperbolic group that splits as a graph of free groups with cyclic edge groups, and which is not isomorphic to a free product of free and surface groups. We show that $G$ admits an exhausting, nested sequence of finite-index non-normal subgroups $G\ge G_1 \ge G_2 \ge \cdots$ with exponential homological torsion growth. More specifically, we prove that simultaneously for every prime $p$,
    \[
    \liminf_{n\rightarrow \infty} \frac{\log \abs{ \mathrm{Tor}_p(G_n^{\mathrm{ab}})}}{[G:G_n]} >0,
    \]
    where $\mathrm{Tor}_p(G_n^{\mathrm{ab}}) = \{g \in G_n^{\mathrm{ab}} \;\vert\; g \text{ has order a power of } p\}$.
\end{abstract}

\maketitle

\tableofcontents

\section{Introduction} \label{intro}

This paper is a continuation of our previous work on virtual torsion in the abelianization of hyperbolic groups that split as graphs of (virtually) free groups with (virtually) cyclic edge groups. There, we proved that, apart from free products of free and surface groups, such groups admit an abundance of homological torsion in finite-index subgroups \cite[Theorem A]{us}. In the present paper we expand on this theme, showing that torsion can grow at an exponential rate along carefully chosen towers of (non-normal) finite-index subgroups. \par \smallskip

In recent years, the study of torsion growth in the homology of finite-sheeted covers has been driven by conjectures of Bergeron–Venkatesh \cite{Bergeron2012} and Lück \cite{Lck2002, Lck2013}, which predict exponential growth of homological torsion along cofinal towers of regular covers of closed hyperbolic $3$-manifolds. Despite much work, no finitely presented group is known to exhibit such growth in a residual chain of normal subgroups. \par \smallskip

In many situations homological torsion is known to grow subexponentially. By contrast, genuine lower bounds are less common, and arise after altering the aforementioned problem in various ways. Dropping the assumption of finite presentability, Kar, Kropholler and Nikolov show that torsion in the abelianization can grow faster than any given function \cite[Theorem~3]{nik}. Foregoing trivial intersection of the groups in the tower, Silver–Williams \cite{SW1,SW2} and Raimbault \cite{Raimbault} prove the non-vanishing of normalized logarithmic torsion for certain knot and link complements. Avramidi, Okun and Schreve show that torsion grows exponentially in the second homology of certain right-angled Artin groups \cite[Corollary~3]{Avramidi}. Finally, giving up the normality assumption, Liu \cite[Theorem 1.3]{Liu2019} proved that every uniform lattice $\Gamma<\mathrm{PSL}(2,\mathbb{C})$ admits an exhausting nested sequence of sublattices $\Gamma \ge \Gamma _1 \ge \Gamma_2 \ge \cdots$ for which $2$-torsion grows exponentially, that is,
\[
    \liminf_{i\rightarrow \infty} \frac{\log \abs{ \mathrm{Tor}_2(H_1(\Gamma_n;\mathbb{Z}))}}{[\Gamma:\Gamma_n]} >0.
\]

Our present paper follows in the same vein. Relying on the methods and results of \cite{us}, we prove:

\begin{mainthm}
    \label{mainthm}
    Let $G$ be a hyperbolic group that splits as a graph of virtually free groups with virtually cyclic edge groups. If $G$ is not virtually a free product of free and surface groups, then there is a nested sequence of finite-index subgroups $G\ge G_1 \ge \allowbreak G_2 \ge \cdots$ such that $\bigcap_i G_i=\{1\}$, and such that for every prime $p$,
    \[
    \liminf_{n\rightarrow \infty} \frac{\log \abs{ \mathrm{Tor}_p(G_n^{\mathrm{ab}})}}{[G:G_n]} >0.
    \]
\end{mainthm}

\begin{rmk*}
    Our construction does not provide a uniform lower bound on the rate of exponential growth for every prime. As $p$ ranges over all primes, we do not know whether the limit inferiors in the \hyperref[mainthm]{Main Theorem} can be bounded from below by a common constant $\epsilon>0$.
\end{rmk*}

One appeal of our construction is that it forces homological torsion to grow simultaneously at every prime. We believe that the same should hold for hyperbolic $3$-manifolds, but covers are more difficult to construct in that setting. By contrast, in the setting of normal covers, which is the one of main interest, Bergeron–Venkatesh predict that in hyperbolic $3$-manifolds, homological torsion should grow subexponentially when restricted to a single prime $p$ \cite[Subsection 9.4]{Bergeron2012}. This is consistent with the aforementioned results of Silver–Williams for cyclic covers of knot complements \cite{SW2}, which show that the $p$-part of homology has trivial growth for every prime $p$. \par \smallskip

Before giving a brief explanation of the tactic of our proof, we point out that the main case of interest in \hyperref[mainthm]{Main Theorem} is when $G$ is one–ended. Residually finite free products in which homological torsion grows exponentially, simultaneously at every prime $p$, are relatively easy to construct from groups admitting sufficient virtual homological torsion. We leave this to the reader.

\subsection*{Proof strategy} The argument proceeds by building inductively a nested sequence of finite-index subgroups
\[
G\ge G_1 \ge G_2 \ge \cdots
\]
in which homological torsion gradually accumulates at every prime $p$ while elements of $G$ are excluded one by one. The inductive scheme has a “fractal-like’’ character: at each stage, the global pattern from the previous step is replicated on a larger scale, using building blocks made out of the groups obtained in the previous stages. The key point is to balance these pieces so that new torsion is introduced without destroying the torsion already present, while simultaneously maintaining a controlled ratio between the amount of homological torsion and the index. \par \smallskip

\begin{figure}[h!]
    \centering
    \makebox[\textwidth][c]{\includegraphics[width=1.2\textwidth]{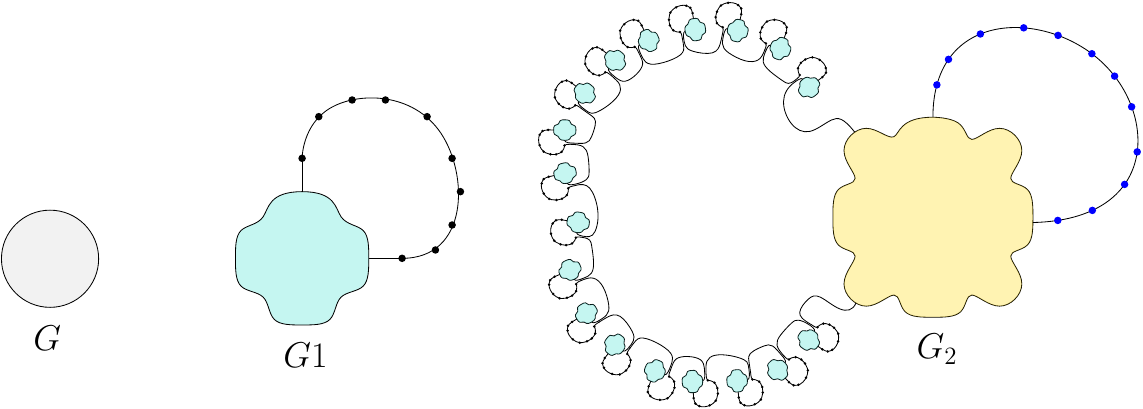}}
    \caption{Constructing $G_0\ge G_1 \ge G_2 \ge \cdots$. At each stage, the dots stand for torsion pieces.}
    \label{fig:intro}
\end{figure}

In \Cref{sec:pieces}, using \cite[Theorem A]{us}, we construct \emph{$p$-torsion pieces}: subgroups of $G$ that contain $p$-homological torsion, and have “loose ends’’ which allow them to be glued together. The group $G_n$ is obtained by modifying a sufficiently deep finite-index subgroup of $G_{n-1}$, attaching two chains: one consisting of copies of the torsion piece for a new prime $p_n$, and another comprised of copies of $G_{n-1}$ itself. The former ensures growth at the new prime $p_n$, while the latter preserves torsion from earlier steps. The essential features of this construction are depicted in \Cref{fig:intro}, which should convey the idea more clearly than words.

\subsection*{Acknowledgments} Ascari was funded by the Basque Government grant IT1483-22. Fruchter received funding from the European Union (ERC, SATURN, 101076148) and the Deutsche Forschungsgemeinschaft (DFG, German Research Foundation) under Germany's Excellence Strategy - EXC-2047/1 - 390685813. 

\section{Preliminaries}
\label{sec:prelim}

In this preliminary section we review the toolkit we will rely on for constructing finite-index subgroups of graphs of free groups with cyclic edge groups. While this exposition is self-contained and includes the ingredients needed for our constructions, we have chosen to keep it streamlined. For a more comprehensive treatment, with intuitive explanations and detailed examples, we refer the reader to our previous paper \cite[Sections 2 and 3]{us}. Throughout the paper we adopt a topological point of view which allows us to phrase constructions in a more geometric language and to draw on familiar tools from covering space theory. \par \smallskip

We adopt Serre’s conventions and regard a graph $\Gamma$ as consisting of a vertex set $\mathrm{V}(\Gamma)$, an edge set $\mathrm{E}(\Gamma)$ equipped with an involution $e\mapsto \overline{e}$ reversing orientation, and maps $\iota,\tau:\mathrm{E}(\Gamma)\to \mathrm{V}(\Gamma)$ recording the initial and terminal vertices of an edge. A \emph{graph of groups} $\mathcal{G}$ over a (connected) graph $\Gamma$ consists of vertex groups $\{G_v\}_{v\in V(\Gamma)}$, edge groups $\{G_e\}_{e\in E(\Gamma)}$ with $G_{\overline{e}}=G_e$, and injective homomorphisms $\varphi_e:G_e\hookrightarrow G_{\tau(e)}$. A \emph{morphism of graphs of groups} $f:\mathcal{H} \longrightarrow \mathcal{G}$ consists of a morphism $\phi:\Delta \longrightarrow \Gamma$ of the underlying graphs, along with morphisms 
\[\{f_v:H_v \rightarrow G_{\phi(v)}\}_{v\in \mathrm{V}(\Delta)}\;\;\text{ and }\;\;\{f_e:H_e\rightarrow G_{\phi(e)}\}_{e\in \mathrm{E}(\Delta)}\]
that intertwine with the edge maps of $\mathcal{H}$ and $\mathcal{G}$.
\par \smallskip

To each graph of groups $\mathcal{G}$, one can attach a fundamental group $G=\pi_1(\mathcal{G})$. A simple way to describe $G$, is by considering the fundamental group of a geometric realization of a \emph{graph of spaces} $\mathcal{X}$ associated to $\mathcal{G}$. We briefly explain how construct such $\mathcal{X}$. Choose a collection of connected CW complexes $\{X_v\}_{v\in \mathrm{V}(\Gamma)}$ and $\{X_e\}_{e\in \mathrm{E}(\Gamma)}$ whose fundamental groups realize the vertex and edge groups of $\mathcal{G}$, along with continuous, combinatorial maps (mapping $k$-cells to $k$-cells) $i_e:X_e\rightarrow X_{\tau(e)}$ such that $(i_e)_\ast=\varphi_e:G_e\rightarrow G_{\tau(e)}$. This data amounts to the graph of spaces $\mathcal{X}$, and its geometric realization $X$ is given by the following quotient
\[
X=\frac{\big(\bigsqcup_{v \in \mathrm{V}(\Gamma)} X_v\big) \; \sqcup \; \big(\bigsqcup_{e\in \mathrm{E}(\Gamma)} X_e\times[0,1]\big)}{\{X_e\times\{0\}\sim X_{\overline{e}}\times\{0\}\}_{e\in \mathrm{E}(\Gamma)}\;\;\text{and}\;\;\{X_e\times \{1\} \sim i_e(X_e)\subseteq X_{\tau(e)}\}_{e\in \mathrm{E}(\Gamma)}}.
\]
In other words, we take the disjoint union of the different vertex and edge spaces of $\mathcal{X}$, and glue each edge space $X_e$ to its image in the terminal vertex space $X_{\tau(e)}$ via the map $i_e$. One easily verifies that $G=\pi_1(X)$ coincides with Serre's definition of $\pi_1(\mathcal{G})$ in \cite[Subsection 5.1]{trees}. We point out that a homomorphism $f:\mathcal{H}\longrightarrow \mathcal{G}$ of graphs of groups induces a homomorphism $f_\ast:H\longrightarrow G$ at the level of fundamental groups. \par \smallskip

Fixing a base point $p\in X$ that lies in one of the vertex spaces $X_{v_p}$ of $\mathcal{X}$, we obtain a base vertex $v_p\in \mathrm{V}(\Gamma)$ of $\mathcal{G}$. To each $g\in G$ we can associate a \emph{reduced path} in the underlying graph $\Gamma$ by considering a geodesic loop $[g]$ in $X$, based at $p$, and projecting it to $\Gamma$ via the map $X\longrightarrow \Gamma$ that collapses each vertex and edge space of $\mathcal{X}$ to a point (cf. Serre's notion of \emph{reduced words} \cite[Subsection 5.2]{trees}). We denote the length of the reduced path of $g$ by $\norm{g}_{\mathcal{G}}$. \par \smallskip

In what follows, we restrict our attention to graphs of groups whose vertex groups are free, and whose edge groups are isomorphic to $\mathbb{Z}$. \par \smallskip

Since our goal is to construct finite-index subgroups of graphs of free groups with cyclic edge groups, we work throughout with finite-sheeted \emph{covers} of such graphs of groups. This lets us use the standard correspondence between subgroups and covers, and phrase our arguments in a topological setting. To set up the necessary tools, it is convenient to adopt a ``local viewpoint'', focusing on each vertex group of $\mathcal{G}$ together with the peripheral data contributed by its incident edges. \par \smallskip

Given a free group $F$ and $w\in F$, we use $[w]_F$ to denote the conjugacy class of $w$ in $F$ (or just $[w]$ when the ambient free group is clear from the context). A \emph{peripheral structure} on a free group $F$ is a (finite) collection of pairwise distinct, non-trivial conjugacy classes $[w_1],\ldots,[w_n]$; we will often abbreviate and write $\underline{w}=(w_1,\ldots,w_n)$ and $[\underline{w}]=\{[w_1],\ldots,[w_n]\}$. We refer to $(F,[\underline{w}])$ as a \emph{pair}. \par \smallskip

Let $G_v$ be a vertex group of $\mathcal{G}$. For each edge $e$ adjacent to $v$, choose a generator $c_e$ of the cyclic group $G_e$. Then 
\[[\underline{w}]=\{[\varphi_e(c_e)]\;\vert\;e\text{ adjacent to }v\}\] 
is a peripheral structure on $G_v$, called the peripheral structure on $G_v$ \emph{induced by} $\mathcal{G}$ at $v$. \par \smallskip

To understand covers of $\mathcal{G}$, we need to understand how peripheral structures behave when passing to subgroups. The key notion is that of \emph{elevations}:

\begin{defn}
    \label{def:alg_elev}
    Let $F$ be a free group, let $H\le F$ and let $1\ne w \in F$. An \emph{elevation} of $[w]_F$ to $H$ is a conjugacy class $[u]_H$ such that
    \begin{enumerate}
        \item there exists $gwg^{-1}\in [w]_F$ such that $u=(gwg^{-1})^d$ for some integer $d\ge 1$, and
        \item $d=\min\{n\;\vert\;g(w^n)g^{-1}\in H\}$.
    \end{enumerate}
    The integer $d$ above is called the \emph{degree} of the elevation, and we denote $d=\deg_{[w]_F}([u]_H)$.
\end{defn}

It is not hard to see that if $H$ is a finitely generated subgroup of $F$, then every conjugacy class in $F$ admits finitely many elevations to $H$ \cite[Lemma 3.8]{us}. The following notion describes how pairs induced at the vertices of a graph of groups behave when passing to covers.

\begin{defn}
    Let $(F,[\underline{w}]_F)$ be a pair and let $H\le F$ be a finitely generated subgroup. Let $[u]_H$ be a peripheral structure on $H$. We say that
    \begin{enumerate}
        \item $(H,[\underline{u}]_H)$ is a \emph{subpair} of $(F,[\underline{w}]_F)$ if every conjugacy class in $[\underline{u}]_H$ is an elevation of a conjugacy class in $[\underline{w}]_F$. In this case, we denote $(H,[\underline{u}]_H)\lepair(F,[\underline{w}]_F)$.
        \item If in addition $H$ is a finite-index subgroup of $F$, and $[\underline{u}]_H$ is the full set of elevations of  $[\underline{w}]_F$ to $H$, then we call the subpair $(H,[\underline{u}]_H)\lepair(F,[\underline{w}]_F)$ a \emph{finite-index pull-back pair}, and refer to $[\underline{u}]_H$ as the \emph{pull-back structure} of $[\underline{w}]_F$ on $H$.
    \end{enumerate}
\end{defn}

With these definitions in hand, we are ready to define finite-sheeted covers of graphs of free groups with cyclic edge groups. 

\begin{defn} \label{def:cover}
    Let $\mathcal{G}$ and $\mathcal{H}$ be graphs of free groups with cyclic edge groups with finite underlying graphs. We say that $\mathcal{H}$ is a \emph{finite-sheeted cover} of $\mathcal{G}$ if there exists a morphism 
    \[f=(\varphi,f_v,f_e):\mathcal{H}\rightarrow \mathcal{G}\]
    such that
    \begin{enumerate}
        \item all of the morphisms $f_v$ and $f_e$ mapping the vertex and edge groups of $\mathcal{H}$ to those of $\mathcal{G}$ are injective, and
        \item for every vertex $v$ of the underlying graph of $\mathcal{H}$, let $(H_v,[\underline{u}])$ be the pair induced by $\mathcal{H}$ at $v$. Then $(f_v(H_v),[f_v(\underline{u})])$ is a finite-index pull-back of the pair $(G_{\phi(v)},[\underline{w}])$ induced by $\mathcal{G}$ at $\phi(v)$.
    \end{enumerate}
\end{defn}

One can readily verify that if $\mathcal{H}$ is a finite-sheeted cover of $\mathcal{G}$ then $H$ is a finite-index subgroup of $G$. The \emph{degree} of the covering is
\[
\deg(\mathcal{H}\twoheadrightarrow \mathcal{G})=[G:H].
\]
On the other hand, if $H$ is a finite-index subgroup of $G$ then $H$ inherits a graph of groups decomposition $\mathcal{H}$ from $\mathcal{G}$, accompanied with a finite-sheeted covering morphism $f:\mathcal{H}\twoheadrightarrow \mathcal{G}$. \par \smallskip

We will often construct finite-sheeted covers of $\mathcal{G}$ “from the ground up.” The idea is to begin with a collection of finite-index pull-back subpairs of the vertex pairs induced by $\mathcal{G}$, and then glue them together along their peripheral structures to obtain a graph of groups as in \Cref{def:cover}. Such a gluing is possible if and only if for every edge $e\in E(\Gamma)$ with edge group $G_e=\langle c_e\rangle$, there exists a degree-preserving bijection between the elevations of $\varphi_e(c_e)$ and of $\varphi_{\overline e}(c_e)$ to the chosen subpairs. \par \smallskip

We will also be interested in infinite-index subgroups of $G$. To this end, it is useful to recall the notion of precovers, which may be regarded as compact cores of certain covers of $\mathcal{G}$:

\begin{defn} \label{def:precover}
    Let $\mathcal{G}$ and $\mathcal{H}$ be graphs of free groups with cyclic edge groups. We say that $\mathcal{H}$ is a \emph{precover} of $\mathcal{G}$ if there exists a morphism 
    \[f=(\varphi,f_v,f_e):\mathcal{H}\rightarrow \mathcal{G}\]
    such that
    \begin{enumerate}
        \item all of the morphisms $f_v$ and $f_e$ mapping the vertex and edge groups of $\mathcal{H}$ to those of $\mathcal{G}$ are injective, and
        \item for every vertex $v$ of the underlying graph of $\mathcal{H}$, let $(H_v,[\underline{u}])$ be the pair induced by $\mathcal{H}$ at $v$. Then $(f_v(H_v),[f_v(\underline{u})])$ is a subpair of the pair $(G_{\phi(v)},[\underline{w}])$ induced by $\mathcal{G}$ at $\phi(v)$.
    \end{enumerate}
\end{defn}

As mentioned earlier, precovers of $\mathcal{G}$ correspond to subgroups of $G$: if $f:\mathcal{H}\to\mathcal{G}$ is a connected precover, then the induced homomorphism $f_\ast:H\to G$ is injective \cite[Lemma 16]{wil:one-ended}. \par \smallskip

In the hyperbolic case, certain precovers can be extended to genuine finite-sheeted covers. Suppose $G$ is hyperbolic and $f:\mathcal{H}\to\mathcal{G}$ is a connected precover in which every vertex group of $\mathcal{H}$ has finite index in the corresponding vertex group of $\mathcal{G}$. Wise showed that hyperbolic graphs of free groups with cyclic edge groups are subgroup separable \cite[Theorem 5.1]{wise:graph-sep}. It follows from Scott's criterion \cite{Scott1978} (see \cite[Lemma 1.3]{wil:sep} for a detailed proof) that $\mathcal{H}$ embeds as a subgraph of some finite-sheeted cover $\mathcal{G}'$ of $\mathcal{G}$. In this case we say that $\mathcal{H}$ can be \emph{completed} to a finite-sheeted cover of $\mathcal{G}$, and call $\mathcal{G}'$ a \emph{completion} of $\mathcal{H}$. \par \smallskip

As with finite-sheeted covers, precovers can also be assembled by gluing together subpairs of the vertex pairs induced by $\mathcal{G}$. The key difference is that we no longer require the full set of elevations to satisfy the aforementioned compatibility conditions. Instead, we may choose to glue only certain elevations whose degrees agree, while leaving others unmatched. Such leftover elevations are called \emph{hanging elevations}. \par \smallskip

The last result that we record deals with controlling the degrees of elevations when passing to finite-index pull-back subpairs. The upshot of obtaining such control, is that it will allow us to splice precovers of $\mathcal{G}$ into finite-sheeted covers of $\mathcal{G}$ even when the degrees of their hanging elevations do not match. A pair $(F,[\underline{w}])$ is malnormal if for every choice of $[w_i]$ and $[w_j]$ in $[\underline{w}]$, and every choice of representatives $w'_i\in [w_i]$ and $w'_j\in [w_j]$, the condition
\[ \langle w'_i\rangle \cap g \langle w'_j \rangle g^{-1}\ne \{1\},\]
forces $w'_i=w'_j$ and $g\in \langle w'_i\rangle$. Equivalently, for some (hence any) choice of representatives $w'_i\in [w_i]$, the tuple $(w'_1,\ldots,w'_n)$ is \emph{independent} (no two elements have conjugate non-trivial powers), and each $w'_i$ generates a maximal cyclic subgroup of $F$. \par \smallskip

The following notion is due to Wise:

\begin{defn}[{\cite[Definition 3.2]{wise:graph-sep}}]
    A group $G$ is called \emph{omnipotent} if, for every tuple of independent non-trivial elements $(g_1,\dots,g_r)$ in $G$, there is an \emph{omnipotence constant} 
    \[K=K(g_1,\dots,g_r)\] 
    such that for every choice of positive integers $(d_1,\dots,d_r)$, there is a finite quotient $q:G\rightarrow Q$ in which the order of $q(g_i)$ is exactly $K\cdot d_i$ for $i=1,\dots,r$.
\end{defn}

Wise proved that free groups are omnipotent \cite[Theorem 3.5]{wise:graph-sep}; before closing this preliminary section we record that a closer look at his proof yields the following refinement, which is the version of omnipotence we will use in our constructions.

\begin{thm} \label{thm:omni}
    Let $(F,[\underline{w}]=\{[w_1],\ldots,[w_n]\})$ be a malnormal pair. Then there is an omnipotence constant $K$, depending only on $[\underline{w}]$, such that for every $n$-tuple of positive integers $\underline{d}=(d_1,\ldots,d_n)$ there is a finite-index pull-back subpair $(G_{\underline{d}},[\underline{u}])\lepair (F,[\underline{w}])$ satisfying
    \begin{enumerate}
        \item $G_{\underline{d}}$ is a finite-index normal subgroup of $F$, and
        \item for every $1\le i \le n$, and for every elevation $[u_j]$ of $[w_i]$ to $G_{\underline{d}}$, 
        \[
        \deg_{[w_i]}[u_j] = K\cdot d_i.
        \]
    \end{enumerate}
    Moreover, if $\underline{d}'=(d'_1,\ldots,d'_n)$ satisfies $d_i \vert d'_i$ for every $1\le i \le n$, then $(G_{\underline{d}'},[\underline{u}'])$ is a finite-index pull-back subpair of $(G_{\underline{d}},[\underline{u}])$.
\end{thm}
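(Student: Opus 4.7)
The plan is to apply Wise's omnipotence theorem (for free groups, \cite[Theorem 3.5]{wise:graph-sep}) to the tuple $(w_1,\ldots,w_n)$; malnormality of $[\underline{w}]$ in particular implies that these elements are independent in the sense required by omnipotence. This produces a uniform constant $K=K(\underline{w})$ and, for each $\underline{d}=(d_1,\ldots,d_n)$, a finite quotient $q_{\underline{d}}\colon F\twoheadrightarrow Q_{\underline{d}}$ in which $q_{\underline{d}}(w_i)$ has order exactly $Kd_i$ for all $i$. Set $N_{\underline{d}}=\ker(q_{\underline{d}})$, a finite-index normal subgroup of $F$. Normality is the crucial feature: for $g\in F$ and $n\ge 1$, one has $gw_i^n g^{-1}\in N_{\underline{d}}\iff w_i^n\in N_{\underline{d}}\iff Kd_i\mid n$. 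Hence every elevation of $[w_i]$ to $N_{\underline{d}}$ has degree exactly $Kd_i$, so conditions (1) and (2) hold with $G_{\underline{d}}$ replaced by $N_{\underline{d}}$.

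The remaining ingredient is the monotonicity clause, which is not automatic because Wise's quotients $q_{\underline{d}}$ are constructed independently of one another. I would enforce nesting by the standard intersection trick: set
\[
G_{\underline{d}} \;=\; \bigcap_{\underline{e}\mid\underline{d}} N_{\underline{e}},
\]
where $\underline{e}\mid\underline{d}$ means $e_i\mid d_i$ for all $i$. Since $\underline{d}$ has only finitely many divisor tuples, $G_{\underline{d}}$ remains a finite-index normal subgroup of $F$. The degree of any elevation of $[w_i]$ to $G_{\underline{d}}$ equals the smallest $n$ with $w_i^n\in N_{\underline{e}}$ for every $\underline{e}\mid\underline{d}$, namely $\operatorname{lcm}\{Ke_i\colon \underline{e}\mid\underline{d}\}=Kd_i$, attained by the divisor tuple $(1,\ldots,1,d_i,1,\ldots,1)$. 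Monotonicity is then immediate: if $d_i\mid d'_i$ for all $i$, every $\underline{e}\mid\underline{d}$ also satisfies $\underline{e}\mid\underline{d}'$, so the intersection defining $G_{\underline{d}'}$ contains the one defining $G_{\underline{d}}$, giving $G_{\underline{d}'}\le G_{\underline{d}}$.

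Finally, I would package the output as pull-back subpairs. Let $[\underline{u}]$ be the full collection of elevations of classes in $[\underline{w}]$ to $G_{\underline{d}}$; this is finite by \cite[Lemma 3.8]{us}, each class has degree $Kd_i$ by the previous step, and independence of the $w_i$'s guarantees that elevations originating from distinct $[w_i]$'s remain distinct in $G_{\underline{d}}$ (two such elevations conjugate in $G_{\underline{d}}$ would be conjugate in $F$, forcing $w_i^{Kd_i}$ and $w_j^{Kd_j}$ to be $F$-conjugate and hence $i=j$). For the moreover clause, the same analysis applied to the chain $G_{\underline{d}'}\le G_{\underline{d}}\le F$ identifies $[\underline{u}']$ with the full set of elevations of $[\underline{u}]$ to $G_{\underline{d}'}$, so $(G_{\underline{d}'},[\underline{u}'])\lepair(G_{\underline{d}},[\underline{u}])$ is a finite-index pull-back.

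There is no deep obstacle in this proof: Wise's theorem carries the group-theoretic weight, and the only non-routine ingredient is the intersection construction that forces the divisibility-monotonicity, which Wise's bare statement does not give directly.
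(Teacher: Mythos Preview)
Your argument is correct. The one point where it genuinely diverges from the paper is the monotonicity clause. The paper does not prove \Cref{thm:omni} from scratch; it records the theorem as a refinement of Wise's result and, in the remark immediately following, obtains the ``moreover'' part by opening up Wise's proof and observing that when $d_i\mid d'_i$ the auxiliary subgroups $L_r$ of \cite[Lemma~3.11]{wise:graph-sep} already satisfy the required containments, so the kernels produced by Wise's construction are nested to begin with. You instead treat Wise's theorem as a black box and manufacture the nesting externally via the intersection $G_{\underline{d}}=\bigcap_{\underline{e}\mid\underline{d}}N_{\underline{e}}$. Your route has the advantage of not depending on the internals of Wise's argument and of being entirely self-contained once omnipotence is granted; the paper's route is shorter and avoids introducing the auxiliary family $\{N_{\underline{e}}\}$, at the cost of asking the reader to revisit Wise's proof. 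Either way the output is the same: a coherent family of normal finite-index subgroups with prescribed elevation degrees.
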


\begin{rmk}
    The last part of \Cref{thm:omni} above follows from the fact that if $d_i \mid d'_i$, the corresponding groups $L_r$ in \cite[Lemma 3.11]{wise:graph-sep} contain one another.
\end{rmk}

\medskip

\section{Torsion pieces}
\label{sec:pieces}

In this section we introduce the main building block of our sequence of covers of a hyperbolic graph of free groups with cyclic edge groups $\mathcal{G}$: \emph{$p$-torsion pieces}. Each of these pieces is precovers of $\mathcal{G}$ that contain $p$-torsion in the abelianization and admits hanging elevations. By chaining torsion pieces together, one obtains precovers with abundant $p$-torsion in homology, while still controlling the index of finite-sheeted covers in which they embed. \par \smallskip

Before turning to the constructions, we would like to put $\mathcal{G}$ into \emph{normal form}, a setting that is particularly convenient for gluing arguments; below is a streamlined version of the proposition, including only the specific properties that will be used in our proof:

\begin{prop}
    [\emph{cf. }{\cite[Proposition 3.35]{us}}]
    \label{prop:normal_form}
    Let $G$ be a one-ended hyperbolic group that splits as a graph of free groups with cyclic edge groups $\mathcal{G}$. Then there is a finite-index subgroup $H\le G$ satisfying the following properties:
    \begin{enumerate}
        \item $H$ decomposes as a graph of free groups with cyclic edge groups $\mathcal{H}$.
        \item The vertex groups of $\mathcal{H}$ are either \emph{cyclic} (with vertex group isomorphic to $\mathbb{Z}$) or \emph{non-cyclic}.
        \item \label{item:3-normal} The underlying graph $\Delta$ of $\mathcal{H}$ is bipartite, with edges connecting cyclic vertices to non-cyclic vertices. Moreover, for every edge $e\in \mathrm{E}(\Delta)$, if $\tau(e)$ is cyclic then $\varphi_e:G_e\rightarrow G_{\tau(e)}$ is an isomorphism.
        \item If $v\in \mathrm{V}(\Delta)$ is non-cyclic then the induced pair $(H_v,[\underline{u}])$ is malnormal.
    \end{enumerate}
\end{prop}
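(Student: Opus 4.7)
My plan has two stages: first, a combinatorial edge-subdivision step that enforces the bipartite shape in (2)--(3); second, a refinement step that passes to a finite-index subgroup to realize the malnormality condition (4).

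\emph{Stage 1 (Subdivision to a bipartite form).} I would subdivide every edge $e \in \mathrm{E}(\Gamma)$ of the underlying graph by inserting a midpoint vertex $m_e$ with vertex group $G_{m_e} := G_e \cong \mathbb{Z}$. Each of the two new edges replacing $e$ inherits the edge group $G_e$, with the identity edge map into $G_{m_e}$ on the midpoint side and the original $\varphi_e$ or $\varphi_{\bar e}$ on the other. This Bass--Serre move preserves $\pi_1(\mathcal{G}) = G$ but refines the decomposition: the underlying graph becomes bipartite with the $m_e$ forming one color class, and every edge oriented into a midpoint automatically carries an isomorphism of edge groups onto its target. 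Any pre-existing cyclic vertices in $\mathcal{G}$ can be absorbed into adjacent vertices by a preparatory reduction, using that one-endedness of $G$ precludes the pathological valence-one or trivially-attached $\mathbb{Z}$-vertices that would otherwise obstruct the bipartite coloring.

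\emph{Stage 2 (Malnormalization of peripheral structures).} For (4), I would work vertex by vertex and glue via separability. At each non-cyclic vertex $v$ of the subdivided graph, write each peripheral element $w_{v,i} \in G_v$ as $r_{v,i}^{k_{v,i}}$ for its primitive root. Applying \Cref{thm:omni} to (representatives of) the conjugacy classes of the $r_{v,i}$, I obtain a finite-index normal subgroup $N_v \le G_v$ in which the order of each primitive root is a controlled multiple of the exponent $k_{v,i}$, forcing every elevation of every $w_{v,i}$ to be primitive in $N_v$ (and therefore to generate a maximal cyclic subgroup), and simultaneously preventing elevations from distinct peripheral classes from having $N_v$-conjugate primitive roots. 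The pullback peripheral structure on $N_v$ is then malnormal. Invoking Wise's subgroup separability theorem for hyperbolic graphs of free groups with cyclic edge groups \cite[Theorem 5.1]{wise:graph-sep}, I then find a single finite-index subgroup $H \le G$ whose intersection with each $G_v$ is contained in $N_v$. The induced decomposition $\mathcal H$ inherits the bipartite shape from Stage 1, and the malnormal structures at each $N_v$ restrict to malnormal pullback structures at the corresponding non-cyclic vertices of $\mathcal H$.

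\emph{Main obstacle.} The central challenge is globalization: the $N_v$ are built independently vertex by vertex, but a single finite-index subgroup $H$ of $G$ must realize all of them simultaneously. Subgroup separability provides exactly this, and is the step where the hyperbolicity of $G$ is essential. A secondary technical point, worth checking carefully, is that the pullback of a malnormal pair under a further finite-index restriction remains malnormal: primitivity of an element of $N_v$ is inherited by any subgroup containing that element, and any conjugacy relation among elevations in the smaller group is already witnessed in $N_v$, so no new coincidences are introduced when we pass from $N_v$ down to $H \cap G_v$.
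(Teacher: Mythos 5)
The central gap is in Stage~2. You apply \Cref{thm:omni} to the tuple of primitive roots $r_{v,i}$, but \Cref{thm:omni} (and Wise's underlying Theorem~3.5) requires the input tuple to be \emph{independent} --- no two elements having conjugate non-trivial powers. You nowhere verify that the $r_{v,i}$ at a given vertex $v$ are independent, and in general they need not be: nothing in the hypotheses prevents two peripheral classes $[w_{v,i}]$, $[w_{v,j}]$ from having commensurable primitive roots (e.g.\ both being powers of a common $r$, or being $[x]$ and $[x^{-1}]$, which are distinct conjugacy classes but not independent). Worse, this cannot be repaired by passing to a finite-index subgroup $N_v\le G_v$ at all: if $r_{v,i}^a$ is conjugate in $G_v$ to $r_{v,j}^b$, then the relevant elevations to any $N_v\le G_v$ are still powers of $G_v$-conjugate elements, so the pullback pair is never malnormal. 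Achieving independence requires genuinely changing the decomposition (folding/normalizing the graph of groups, or passing to a different finite-index subgroup of $G$ and its induced splitting, not merely to subgroups of the individual $G_v$). You also assert that omnipotence ``prevents elevations from distinct peripheral classes from having $N_v$-conjugate primitive roots''; omnipotence controls degrees, not cross-class conjugacy, so this claim is unfounded --- though if independence of the $r_{v,i}$ were established it would be automatic and the claim unnecessary.

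Two secondary issues. First, in Stage~1, ``absorbing a pre-existing cyclic vertex into an adjacent vertex'' collapses an edge and replaces $G_v$ by the amalgam $G_v\ast_{G_e}G_C$; when the edge map $G_e\hookrightarrow G_C\cong\mathbb{Z}$ is a proper inclusion this amalgam is in general not free (it can be, e.g., a torus-knot group), so freeness of the vertex groups is not preserved. One-endedness does not rule out such non-surjective edge maps into cyclic vertices, so the ``preparatory reduction'' needs an actual argument (typically: first pass to a finite cover to make edge maps into cyclic vertices isomorphisms). Second, the globalization step is under-justified: subgroup separability of $N_v$ in $G$ gives finite-index overgroups avoiding given elements, but does not by itself produce a single $H\le G$ with $H\cap G_{v'}\le N_{v'}$ simultaneously for all $v'$ while keeping the $N_v$ inside $H$. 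The standard route is to assemble a precover from the $N_v$'s and complete it via Scott's criterion, but this requires arranging degree-compatibility of elevations across edges --- exactly the kind of bookkeeping your sketch elides. (The paper itself does not reprove this proposition; it imports it from \cite[Proposition 3.35]{us}, so there is no in-text proof to compare against, but the gaps above are independent of that.)
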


\begin{rmk}
    If $\mathcal{G}$ is in normal form, then so are all of its finite-sheeted covers.
\end{rmk}

Note that \Cref{prop:normal_form} assumes $G$ to be one-ended; we begin with a simple reduction that avoids this additional assumption. \Cref{prop:normal_form} implies that $G$ admits a finite-index subgorup $G'$ in which every one-ended factor of its Grushko decomposition $G'=G'_1\ast \cdots \ast G'_n \cdot F$ is in normal form. Picking a non-cyclic vertex group $G'_{v_i}$ of each one-ended factor $G'_i$, and regarding the free product $G'_{v_1}\ast \cdots \ast G'_{v_n} \ast F$ as a single vertex group of $\mathcal{G}'$, we may assume that $\mathcal{G}'$ satisfies the four properties of \Cref{prop:normal_form}. We will therefore work under the following standing assumption:

\begin{stassumption}
     $\mathcal{G}$ is a hyperbolic group that splits as a graph of free groups with cyclic edge groups, which is not isomorphic to a free product of free and surface groups, and which satisfies the properties in \Cref{prop:normal_form}. We keep the notation from Subsection \ref{sec:prelim}; in particular, $\Gamma$ denotes the underlying graph of $\mathcal{G}$.
\end{stassumption}

We will also need the following elementary claims, which will be used in the construction of $p$-torsion pieces as well as in the proof of the \hyperref[mainthm]{Main Theorem}. Recall that a \emph{cut set} of a graph $\Gamma$ is a set $S$ of vertices such that $\Gamma - S$ is disconnected.

\begin{claim}
    \label{claim:connected}
    Let $C$ be a cyclic vertex of $\mathcal{G}$. Then there exists a finite-sheeted cover $\mathcal{G}'$ of $\mathcal{G}$, and lifts $C_1,\ldots, C_n$ of $C$ to $\mathcal{G}'$, such that
    \begin{enumerate}
        \item $C_1,\ldots, C_n$ are all adjacent to the same non-cyclic vertex of $\mathcal{G}'$, and
        \item $\{C_1,\ldots,C_n\}$ is not a cut set of the underlying graph $\Gamma'$ of $\mathcal{G}'$.
    \end{enumerate} 
\end{claim}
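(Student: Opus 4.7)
Note that condition (1) is immediate once we fix a non-cyclic vertex $V$ in any finite-sheeted cover $\mathcal{G}'$: the lifts of $C$ adjacent to $V$ form a valid candidate set, and we are free to use any subset thereof as $\{C_1,\ldots,C_n\}$. Hence it suffices to find a cover $\mathcal{G}'$ containing a lift $C_1$ of $C$ that is not a cut vertex of $\Gamma'$ (take $V$ to be any non-cyclic neighbor of $C_1$ and $n=1$). By normal form, $\Gamma$ is bipartite and connected, so $C$ has a non-cyclic neighbor $W$; fix such a $W$ together with an edge $e$ from $W$ to $C$, and let $w := \varphi_e(c_e) \in G_W$. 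If $C$ is not a cut vertex of $\Gamma$ to begin with, take $\mathcal{G}' = \mathcal{G}$ and we are done. Otherwise, the plan is to produce a cover in which some lift $C_1$ of $C$ lies on a cycle of $\Gamma'$.

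To build such a cover, I would first apply \Cref{thm:omni} to the malnormal pair $(G_W,[w])$ (malnormality is part of the normal form) to obtain a finite-index normal subgroup $N \trianglelefteq G_W$ in which every elevation of $w$ shares a common degree. Using the precover gluing construction discussed after \Cref{def:cover}, I would then assemble a precover $\mathcal{P}$ of $\mathcal{G}$ containing two lifts $V_1, V_2$ of $W$ with vertex group $N$, a common lift $C_1$ of $C$ adjacent to both, together with a ``detour'' of pull-back subpairs connecting $V_2$ back to $V_1$ through lifts of $C$ other than $C_1$. This detour creates a cycle in the underlying graph of $\mathcal{P}$ passing through $C_1$. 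The precover is then completed to a finite-sheeted cover $\mathcal{G}'$ of $\mathcal{G}$ via Wise's theorem (as discussed after \Cref{def:precover}). In $\Gamma'$ the lift $C_1$ lies on the cycle, and since the completion attaches pieces only at the hanging elevations of $\mathcal{P}$ (all at non-cyclic vertices, as the peripheral structure at a cyclic vertex in normal form is trivial by property \eqref{item:3-normal}), removing $C_1$ does not disconnect these pieces from $\mathcal{P}$. Consequently $\{C_1\}$ is not a cut set of $\Gamma'$.

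The main obstacle is assembling the precover so that elevation degrees match at every gluing while simultaneously producing the desired cycle. Omnipotence (\Cref{thm:omni}) is the essential tool: it provides the flexibility to align elevation degrees across all the edges used in the detour, and its applicability relies precisely on the malnormality of the vertex pairs guaranteed by the normal form. A secondary subtlety is to check that Wise's completion does not inadvertently identify parts of $\mathcal{P}$ in a way that collapses the cycle; this is avoided because completion only attaches new structure at the remaining hanging elevations, none of which lie on the constructed cycle through $C_1$. Hence the non-cut-vertex property of $C_1$ is preserved in the final cover, yielding the claim with $n=1$.
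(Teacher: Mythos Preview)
Your reduction to $n=1$ is fine, but the precover construction has two gaps. First, you ask for a single lift $C_1$ adjacent to two distinct lifts $V_1,V_2$ of the \emph{same} non-cyclic vertex $W$. By property~\eqref{item:3-normal} each edge of $\Gamma$ incident to $C$ has exactly one lift at $C_1$, so $C_1$ can carry at most one lift of the edge $e$ joining $W$ to $C$; unless $\Gamma$ happens to have multiple edges between $W$ and $C$, the configuration you describe is impossible. Second, and more seriously, your assertion that the completion attaches pieces only at non-cyclic vertices fails whenever $C$ has valence $\ell\ge 3$. Property~\eqref{item:3-normal} says each edge map into $C$ is an isomorphism, not that $C$ has only one incident edge; consequently any lift of $C$ in a genuine cover has valence exactly $\ell$. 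If $C_1$ carries only two edges in $\mathcal{P}$, the completion must supply the remaining $\ell-2$ edges at $C_1$, and nothing in your argument prevents the pieces attached along those new edges from being separated once $C_1$ is removed.

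The paper avoids both problems by working from the non-cyclic side. For each non-cyclic neighbor $v_i$ of $C$ it picks, via an elementary abelianization argument rather than omnipotence, a single finite-index normal subgroup $H_i\trianglelefteq G_{v_i}$ containing the edge generator $t_i$, so that $[t_i]$ has many degree-$1$ elevations to $H_i$; it then glues $n+1$ copies of $C$, each joined by one edge to every $H_i$. In this precover every cyclic vertex already carries its full star, so completion adds material only at the $H_i$, and removing $C_1,\ldots,C_n$ still leaves $C_{n+1}$ connecting all the $H_i$, and hence the whole cover.
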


\begin{proof}
    Let $e_1,\ldots,e_\ell\in \mathrm{E}(\Gamma)$ be the edges adjacent to $C$ in $\Gamma$ (so that $\tau(e_i)=C$ for $1\le i \le \ell$), and let $v_i=\iota(e_i)$ (we allow $v_i=v_j$ for $i\ne j$). Each $v_i$ is a non-cyclic vertex of $\mathcal{G}$, and the attaching map identifies $G_{e_i}$ with a cyclic subgroup $\varphi_{\overline{e}_i}(G_{e_i})\le G_{v_i}$. Let $t_i$ be a generator of $\varphi_{\overline{e}_i}(G_{e_i})$. Since each $G_{v_i}$ is non-cyclic, choosing an appropriately large finite quotient of $(G_{v_i})^\ab$ in which $t_i$ dies gives a finite-index normal subgroup $t_i \in H_i\trianglelefteq G_{v_i}$ with $[G_{v_i}:H_i]>n$. In particular, $[t_i]$ admits $[G_{v_i}:H_i]>n$ distinct degree-$1$ elevations to $H_i$. \par\smallskip
    
    To finish, take $n+1$ copies $C_1, \ldots, C_{n+1}$ of $C$, and glue $n+1$ of the elevations of $[t_i]$ to $H_i$ to $C_1,\ldots, C_{n+1}$. Since $\mathcal{G}$ is subgroup separable, we may complete this precover of $\mathcal{G}$ to a finite-sheeted cover $\mathcal{G}'$ \cite[Theorem 5.1]{wise:graph-sep}. By construction, removing $C_1,\ldots, C_n$ from $\mathcal{G}'$ does not disconnect $\mathcal{G}'$.
\end{proof}

\begin{claim} \label{claim:lift-cutset}
Let $\mathcal{G}'$ and $C_1,\ldots,C_n$ be as in \Cref{claim:connected}. Then for every finite-sheeted cover $\mathcal{G}^\dagger$ of $\mathcal{G}'$, there exist lifts $C^\dagger_1,\ldots,C^\dagger_n$ of $C_1,\ldots,C_n$ to $\mathcal{G}^\dagger$, such that $\{C^\dagger_1,\ldots,C^\dagger_n\}$ is not a cut set of the underlying graph $\Gamma^\dagger$ of $\mathcal{G}^\dagger$.
\end{claim}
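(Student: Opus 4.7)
The plan exploits the common non-cyclic neighbor $v$ of $C_1,\ldots,C_n$ in $\Gamma'$ (\Cref{claim:connected}(1)) and the connectivity of $Y':=\Gamma'-\{C_1,\ldots,C_n\}$ (\Cref{claim:connected}(2)). Fix any lift $v^\dagger$ of $v$ in $\Gamma^\dagger$; since covers of graphs of groups are surjective on vertex stars, every edge incident to $v$ in $\Gamma'$ elevates at $v^\dagger$. For each $i$, define $C_i^\dagger$ to be the other endpoint of some edge at $v^\dagger$ lying above the edge $v-C_i$ of $\Gamma'$.

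To show $\Gamma^\dagger-\{C_1^\dagger,\ldots,C_n^\dagger\}$ is connected, let $Y^\dagger$ be the preimage of $Y'$ in $\Gamma^\dagger$, that is, $\Gamma^\dagger$ with all lifts of the $C_i$'s removed. For any $w^\dagger\in Y^\dagger$ with projection $w\in Y'$, a path in $Y'$ from $w$ to $v$ lifts, starting at $w^\dagger$, to a path in $Y^\dagger$ ending at some lift of $v$; this path avoids all lifts of the $C_i$'s, and in particular the $C_i^\dagger$'s. Hence every vertex of $Y^\dagger$ is connected in $\Gamma^\dagger-\{C_i^\dagger\}$ to some lift of $v$, and bipartiteness extends this to every cyclic vertex of $\Gamma^\dagger$ outside $\{C_i^\dagger\}$, since such a vertex is adjacent to a non-cyclic vertex of $Y^\dagger$. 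So connectivity of $\Gamma^\dagger-\{C_i^\dagger\}$ reduces to showing that all lifts of $v$ lie in a common component.

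This last reduction is the main obstacle. Within $Y^\dagger$ the lifts of $v$ are distributed among the connected components $K_1,\ldots,K_m$ of $Y^\dagger$ (with $v^\dagger\in K_1$, say), and these components must be reconnected using the surviving cyclic vertices of $\Gamma^\dagger-\{C_i^\dagger\}$---the non-selected lifts $C_i^{(r)}$ with $r\neq r_i$ and lifts of cyclic vertices of $\Gamma'$ other than the $C_i$'s. I would argue via a combinatorial analysis of the bipartite incidence graph between the components $K_a$ and the lifts of the $C_i$'s: each component contains a lift of $v$, each lift of $v$ has an elevation above every edge $v-C_i$, and the removed $C_i^\dagger$'s are all adjacent to $K_1$, so their impact on the other components is constrained. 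With enough care in the choice of $v^\dagger$ or of the specific elevations used to define the $C_i^\dagger$, the surviving cyclic vertices should reconnect all components of $Y^\dagger$; making this combinatorial step rigorous is where the bulk of the work lies.
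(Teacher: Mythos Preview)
Your reduction is correct up to the point you yourself flag: after fixing $v^\dagger$ and defining the $C_i^\dagger$, you correctly argue that every vertex of $Y^\dagger$ reaches some lift of $v$ inside $Y^\dagger$, and that it remains to connect the various lifts of $v$ to one another in $\Gamma^\dagger-\{C_i^\dagger\}$. But that last step is the whole content of the claim, and you leave it as a vague ``bipartite incidence'' analysis with no argument. As stated, your sketch uses nothing specific to the situation, and indeed for general graph covers the analogous statement is false: removing one lift per fibre from a cover can disconnect it. So there is a genuine gap.

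The paper's proof supplies the two missing ingredients. First, because $\mathcal{G}'$ is in normal form (\Cref{prop:normal_form}\ref{item:3-normal}), edge maps into cyclic vertices are isomorphisms, so the star of each cyclic vertex lifts bijectively: the valence of $C_i^\dagger$ equals that of $C_i$, and $e_i^\dagger$ is the \emph{only} edge at $C_i^\dagger$ above $e_i$. Your sketch never invokes this, and without it there is no reason the chosen $C_i^\dagger$'s should be ``combinatorially harmless.'' Second, rather than attempting a global component analysis, the paper connects each neighbour $w^\dagger$ of $C_i^\dagger$ to $v^\dagger$ directly: choose an embedded loop $v\to C_i\to w\to\cdots\to v$ in $\Gamma'$ avoiding $\{C_1,\ldots,C_n\}$ except at the single $C_i$, realise it as the reduced path of some $g\in G'$, and take $g^d\in G^\dagger$. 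The reduced path of $g^d$ in $\Gamma^\dagger$ begins $v^\dagger\to C_i^\dagger\to w^\dagger$, and the valence observation forces any return to $C_i^\dagger$ to come through $v^\dagger$ first; the subpath from $w^\dagger$ to the next visit to $v^\dagger$ then avoids every $C_j^\dagger$. This use of the group structure to manufacture paths in the cover is what your purely combinatorial outline lacks.
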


\begin{proof}    
    Let $\mathcal{G}^\dagger$ be a finite-sheeted cover of $\mathcal{G}'$, and denote its underlying graph by $\Gamma^\dagger$. By the construction in the proof of \Cref{claim:connected}, there is a non-cyclic vertex $v\in \mathrm{V}(\Gamma')$ adjacent to $C_1,\ldots,C_n$. For each $1\le i \le n$, let $e_i\in \mathrm{E}(\Gamma')$ be an edge joining $v$ and $C_i$. Choose a vertex $v^\dagger\in \mathrm{V}(\Gamma^\dagger)$ that lies above $v$. For each $1\le i \le n$, there exists an edge $e^{\dagger}_i$ that lies above $e_i$ with initial vertex $v^\dagger$; denote the terminal vertex of $e^\dagger_{i}$ by $C^\dagger_i$. We claim that $\{C^\dagger_1,\ldots,C^\dagger_n\}$ is not a cut set of $\Gamma^\dagger$. \par \smallskip

    We first observe that since $\mathcal{G}'$ is in normal form, the third property of \Cref{prop:normal_form} ensures that the valence of $C^\dagger _i$ in $\Gamma^\dagger$ coincides with the valence of $C_i$ in $\Gamma'$. In particular, $e^\dagger_i$ is the only edge adjacent to $C^\dagger_i$ that lies above $e_i$. \par \smallskip
    
    To show that $\{C^\dagger_1,\ldots,C^\dagger_n\}$ is not a cut set of $\Gamma^\dagger$, it suffices to check that for every vertex $w^\dagger\in \mathrm{V}(\Gamma^\dagger)$ which is adjacent to some $C^\dagger_i$, there is a path between $v^\dagger$ and $w^\dagger$ in $\Gamma^\dagger-\{C^\dagger_1,\ldots,C^\dagger_n\}$.  \par \smallskip

    Let $w\in \mathrm{V}(\Gamma')$ be the vertex lying below $w^\dagger$; note that $(w,C_i)\in \mathrm{E}(\Gamma)$. Since $\{C_1,\ldots,C_n\}$ is not a cut set of $\Gamma$, there exists an embedded loop $\gamma'_w:S^1\rightarrow \Gamma$ that travels through the following vertices (and edges) of $\Gamma'$:
    \begin{equation}
    v \xrightarrow{e_i} C_i \longrightarrow w \longrightarrow u_1 \longrightarrow \cdots \longrightarrow u_k \longrightarrow v
    \end{equation} \label{eq:path}
    where $u_j \notin \{v,w,C_1,\ldots,C_n\}$ for every $1\le j \le k$. Let $g\in G'$ be an element whose associated reduced path in $\Gamma'$ coincides with the path above.
    \par \smallskip

    Regard $v^\dagger$ as the base vertex of $\mathcal{G}^\dagger$; since $G^\dagger$ is a finite-index subgroup of $G'$, $g^d \in G^\dagger$ for some $d\ge 1$. Consider the reduced path $P$ in $\Gamma^\dagger$ associated to $g^d$; up to changing our choice of $g$, we may assume that the path $P$ starts with $v^\dagger \xrightarrow{e^\dagger_i}C^\dagger_i \longrightarrow w^\dagger \longrightarrow \cdots$. \par \smallskip

    If $P$ passes through $C^\dagger_i$ exactly once, then the suffix from its visit to $w^\dagger$ back to $v^\dagger$ provides the desired path. However, $P$ may meet $C^\dagger_i$ multiple times (up to $d$ times). In this case, the valence observation above implies that $P$ must return to $C^\dagger_i$ through the edge $e^\dagger_i$, and hence through $v^\dagger$. In particular, the subpath of $P$ beginning with its first visit to $w^\dagger$ and ending with its next visit to $v^\dagger$ yields the required path that avoids $C^\dagger_1,\ldots, C^\dagger_n$.
\end{proof}

Before constructing $p$-torsion pieces, we fix the following notation: for a finitely generated abelian group $A$, let $b_1(A)$ denote the first Betti number of $A$, i.e., the rank of its free part. For a prime $p$, let $r_p(A)$ be the \emph{$p$-rank} of $A$, that is, the number of $p$-group factors in a primary decomposition of $A$. We have that
\[
r_p(A)=\dim_{\mathbb{F}_p} H_1(A;\mathbb{F}_p)-b_1(A)=\dim_{\mathbb{F}_p}A\otimes \mathbb{F}_p-b_1(A)=\dim_{\mathbb{F}_p}(A/pA)-b_1(A).
\]

The following straightforward lemma records how the $p$-rank behaves under quotients:

\begin{lemma}
    \label{lem:relation}
    Let $A$ be a finitely generated abelian group and let $x\in A$. Then 
    \[r_p(A)\ge r_p(A/\langle x \rangle )-1.\]
\end{lemma}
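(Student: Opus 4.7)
The plan is to exploit the identity
\[
r_p(A)=\dim_{\mathbb{F}_p}(A/pA)-b_1(A)
\]
recorded just before the lemma, and compare the two sides with the analogous quantities for $B:=A/\langle x\rangle$ by tensoring the short exact sequence $\langle x\rangle\hookrightarrow A\twoheadrightarrow B$ with $\mathbb{F}_p$ and with $\mathbb{Q}$ respectively.

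First I would tensor with $\mathbb{F}_p$. Since $-\otimes\mathbb{F}_p$ is right exact, the sequence $\langle x\rangle\otimes\mathbb{F}_p\to A\otimes\mathbb{F}_p\to B\otimes\mathbb{F}_p\to 0$ is exact. The group $\langle x\rangle$ is cyclic, so $\langle x\rangle\otimes\mathbb{F}_p$ has dimension at most $1$, giving
\[
0\ \le\ \dim_{\mathbb{F}_p}(A/pA)-\dim_{\mathbb{F}_p}(B/pB)\ \le\ 1.
\]
Next I would tensor with $\mathbb{Q}$, which is flat, to obtain
\[
0\ \le\ b_1(A)-b_1(B)\ =\ \dim_{\mathbb{Q}}(\langle x\rangle\otimes\mathbb{Q})\ \le\ 1,
\]
the value being $1$ if $x$ has infinite order and $0$ if $x$ is torsion.

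Finally, subtracting the two formulas,
\[
r_p(A)-r_p(B)=\bigl[\dim_{\mathbb{F}_p}(A/pA)-\dim_{\mathbb{F}_p}(B/pB)\bigr]-\bigl[b_1(A)-b_1(B)\bigr],
\]
so the difference lies in $\{-1,0,1\}$ by the two bounds above. In particular $r_p(A)\ge r_p(B)-1$, which is exactly what the lemma claims.

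There is no real obstacle here; this is a purely formal consequence of the right-exactness of $-\otimes\mathbb{F}_p$, the flatness of $\mathbb{Q}$, and the fact that $\langle x\rangle$ is cyclic. The only thing to watch is to treat the torsion and infinite-order cases uniformly, which the framework above does automatically.
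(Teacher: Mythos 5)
Your proof is correct and takes essentially the same approach as the paper: both apply the formula $r_p(A)=\dim_{\mathbb{F}_p}(A/pA)-b_1(A)$, tensor the short exact sequence $\langle x\rangle\hookrightarrow A\twoheadrightarrow A/\langle x\rangle$ with $\mathbb{F}_p$ to bound the change in $\dim_{\mathbb{F}_p}(\,\cdot\,/p\,\cdot\,)$, and observe that $b_1$ can drop by at most one. You add a bit more than needed (the matching upper bound $\dim_{\mathbb{F}_p}(\langle x\rangle\otimes\mathbb{F}_p)\le 1$ and the explicit $\mathbb{Q}$-tensoring argument for the Betti-number comparison, which the paper simply asserts), but the essential mechanism is identical.
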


\begin{proof}
    Consider the short exact sequence
    \[
    0\longrightarrow \langle x \rangle \xrightarrow{\;i\;} A \longrightarrow A/\langle x \rangle\longrightarrow 0.
    \]
    Tensoring with $\mathbb{F}_p$ is right-exact, yielding
    \[
    \langle x \rangle \otimes \mathbb{F}_p \xrightarrow{i\otimes \mathbb{F}_p} A \otimes \mathbb{F}_p \longrightarrow (A/\langle x \rangle ) \otimes \mathbb{F}_p\longrightarrow 0,
    \]
    and therefore
    \begin{align*}
    \dim_{\mathbb{F}_p}(A/pA)&=\dim_{\mathbb{F}_p}((A/\langle x\rangle)/p(A/\langle x \rangle))+\dim_{\mathbb{F}_p}(\mathrm{im}(i\otimes \mathbb{F}_p)) \\ &
    \ge \dim_{\mathbb{F}_p}((A/\langle x\rangle)/p(A/\langle x \rangle)). 
    \end{align*}
    On the other hand, $b_1(A/\langle x \rangle )\ge b_1(A)-1$. Combining these gives
    \begin{align*}
    r_p(A)&=\dim_{\mathbb{F}_p}(A/pA)-b_1(A) \\ &
    \ge \dim_{\mathbb{F}_p}((A/\langle x\rangle)/p(A/\langle x \rangle)) - b_1(A/\langle x \rangle ) - 1 = r_p(A/\langle x \rangle)-1.
    \end{align*}
\end{proof}

With this in hand, we are ready to construct our first building block: $p$-torsion pieces.

\begin{lemma} [$p$-Torsion pieces]
\label{lem:beads}
    Let $C$ be a cyclic vertex of $\mathcal{G}$. Then for every prime $p$ there exists a precover $\mathcal{G}'_p$ of $\mathcal{G}$ satisfying the following properties: 
    \begin{enumerate}
        \item There are two cyclic vertices $C_1$ and $C_2$ of $\mathcal{G}'_p$ that lie above $C$, such that every edge map $\varphi_e:G_e\longrightarrow C$ admits a single hanging elevation to $\mathcal{G}'_p$, with either $C_1$ or $C_2$ as its target vertex. Both $C_1$ and $C_2$ appear as targets of such hanging elevations, and these are the only hanging elevations of $\mathcal{G}'_p$.
        \item $\pi_1(\mathcal{G}'_p)^{\ab}=P\oplus K$, where $P$ is a (non-trivial) finite $p$-group and $[C_1],[C_2]\in K$.
        \end{enumerate}
\end{lemma}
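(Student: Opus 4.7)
My plan is to construct $\mathcal{G}'_p$ by carving open a sufficiently deep finite-sheeted cover of $\mathcal{G}$ in which $p$-torsion has already been installed. First, applying \Cref{claim:connected} to the cyclic vertex $C$ with $n=2$, I obtain a finite-sheeted cover $\mathcal{G}_0$ of $\mathcal{G}$ carrying two cyclic lifts $D_1, D_2$ of $C$ sharing a common non-cyclic neighbor and not forming a cut set of $\mathcal{G}_0$'s underlying graph. Invoking \cite[Theorem A]{us} on $\pi_1(\mathcal{G}_0)$ and the prime $p$, I pass to a further finite-sheeted cover $\mathcal{G}^*$ of $\mathcal{G}_0$ whose abelianization has $p$-rank at least $\ell+1$, where $\ell$ denotes the number of edges of $\mathcal{G}$ incident to $C$ (assumed $\ge 2$). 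By \Cref{claim:lift-cutset}, I may choose lifts $D_1^*, D_2^*$ of $D_1, D_2$ in $\mathcal{G}^*$ which still fail to be a cut set.

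Next, I partition the $\ell$ edges of $\mathcal{G}$ at $C$ into non-empty subsets $E_1 \sqcup E_2$. For each $e \in E_i$ ($i = 1,2$), the cover $\mathcal{G}^*$ contains a unique edge $\tilde{e}^{(i)}$ above $e$ adjacent to $D_i^*$, whose other endpoint is some non-cyclic vertex $u_e$. I form $\mathcal{G}'_p$ from $\mathcal{G}^*$ by deleting each such $\tilde{e}^{(i)}$ and simultaneously omitting the corresponding elevation of $\varphi_e(c_e)$ from the peripheral structure of the pair at $u_e$. Setting $C_i := D_i^*$, the result is a precover of $\mathcal{G}$ whose only hanging elevations sit at $C_1, C_2$, one per edge of $\mathcal{G}$ at $C$ in accordance with the partition. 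Connectedness of $\mathcal{G}'_p$ follows because each $C_i$ retains a glued edge coming from $E_{3-i} \neq \emptyset$, and because the non-cut-set property keeps $\mathcal{G}^* \setminus \{D_1^*, D_2^*\}$ connected.

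For the $p$-torsion claim, I compare abelianizations: $\pi_1(\mathcal{G}^*)^{\ab}$ is recovered from $\pi_1(\mathcal{G}'_p)^{\ab}$ by adjoining $\ell$ free $\mathbb{Z}$-summands (one per re-attached edge; since each such edge is non-bridge in $\mathcal{G}^*$ its re-insertion raises $b_1$ of the underlying graph by one) and imposing $\ell$ additional edge relations. Adjoining $\mathbb{Z}$-factors leaves $r_p$ unchanged, whereas by iterating \Cref{lem:relation} each additional relation raises $r_p$ by at most one. Consequently $r_p(\pi_1(\mathcal{G}'_p)^{\ab}) \ge r_p(\pi_1(\mathcal{G}^*)^{\ab}) - \ell \ge 1$, so the $p$-primary torsion $P$ of $\pi_1(\mathcal{G}'_p)^{\ab}$ is non-trivial.

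The main obstacle is verifying $[C_1], [C_2] \in K$, i.e., that neither class is $p$-torsion in $\pi_1(\mathcal{G}'_p)^{\ab}$. Each $C_i$ retains at least one glued edge imposing a relation of the shape $d_i [C_i] = \alpha_e$, where $d_i = [C : C_i]$ and $\alpha_e$ lies in a non-cyclic vertex group. I plan to use \Cref{thm:omni} in the choice of $\mathcal{G}^*$ to force each $d_i$ coprime to $p$, reducing the task to establishing that $\alpha_e$ is not $p$-torsion. The delicate step is then to arrange the partition and the structure of the non-cyclic vertex groups so that a homomorphism $\pi_1(\mathcal{G}'_p)^{\ab} \to \mathbb{Z}$ sending $[C_i] \mapsto 1$ can be constructed, certifying that $[C_i]$ has infinite order; ensuring this compatibly with the $p$-torsion produced in the previous step is the most intricate part of the argument.
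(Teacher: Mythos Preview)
Your construction diverges from the paper's in a way that creates real difficulties, and your plan for item~(2) rests on a misreading of what that condition asks.

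\textbf{The paper's approach.} The paper works with a \emph{single} cyclic lift $C'$ of $C$ (not two), removes it, partitions the resulting hanging elevations into $E_1\sqcup E_2$, and re-attaches them to two fresh copies $C_1,C_2$ of $C'$. The point is that $\pi_1(\mathcal{G}_p)$ is then a \emph{single} HNN extension $\pi_1(\mathcal{G}'_p)\ast_{C_1=C_2}$, so only one relation is introduced and \Cref{lem:relation} gives $r_p\ge 4-1=3$. With $r_p\ge 3$ in hand, condition~(2) follows by a short linear-algebra argument: the image of the $2$-generated subgroup $\langle[C_1],[C_2]\rangle$ in the $p$-primary part $E_p$ lies, after an automorphism (Smith normal form), inside two of the cyclic summands, so the remaining summands form a non-trivial $p$-group $P$ with $[C_1],[C_2]$ in its complement $K$.

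\textbf{Where your argument breaks.} You delete $\ell$ edges, which costs $\ell$ relations and leaves only $r_p\ge 1$. That is not enough: condition~(2) does \emph{not} say that $[C_i]$ is non-$p$-torsion or of infinite order; it asks for a splitting $P\oplus K$ with $P$ a non-trivial $p$-group and both $[C_i]\in K$. For instance, in $\mathbb{Z}\oplus\mathbb{Z}/p\mathbb{Z}$ the element $(p,1)$ has infinite order yet lies in no complement of a non-trivial $p$-group factor, so constructing a map to $\mathbb{Z}$ sending $[C_i]\mapsto 1$ is neither the right target nor sufficient. Your ``delicate step'' with omnipotence is aiming at the wrong statement; the correct route is to secure $r_p\ge 3$ and then run the Smith-normal-form argument above, which needs no control on the orders of $[C_i]$ at all.

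\textbf{A secondary issue with item~(1).} Deleting the edges $\tilde{e}^{(i)}$ leaves hanging elevations at the \emph{non-cyclic} endpoints $u_e$ as well (removing the class from the induced peripheral structure at $u_e$ is precisely what makes it hanging). This violates the clause ``these are the only hanging elevations of $\mathcal{G}'_p$''. The paper's vertex-splitting avoids this entirely: all non-cyclic vertices remain fully glued, and the only deficiency sits at $C_1,C_2$.
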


\begin{proof}
    By \Cref{claim:connected}, up to passing to a finite-sheeted cover of $\mathcal{G}$, we may assume that removing $C$ does not disconnect $\mathcal{G}$. By \cite[Theorem A]{us}, there exists a finite-sheeted cover $\mathcal{G}_p$ of $\mathcal{G}$ that has a $(\mathbb{Z}/p\mathbb{Z})^4$ direct factor in its abelianization. By \Cref{claim:lift-cutset}, there is a cyclic vertex $C'$ of $\mathcal{G}_p$ that lies above $C$, and such that removing $C'$ does not disconnect $\Gamma_p$. \par \smallskip
    
    The precover $\mathcal{G}'_p$ is obtained from $\mathcal{G}_p$ in the following manner: remove $C'$ from $\mathcal{G}_p$, and divide the hanging elevations of $\mathcal{G}_p-C'$ into two non-empty sets $E_1$ and $E_2$. Take two copies $C_1$ and $C_2$ of $C'$, and attach the hanging elevations in $E_i$ to $C_i$ for $i=1,2$. Observe that $\pi_1(\mathcal{G}_p)$ coincides with the HNN extension $\mathcal{G}'_p \ast _{C_1=C_2}$, which at the level of homology translates to
    \[
    \pi_1(\mathcal{G}_p)^{\ab}=\frac{{\pi_1(\mathcal{G}'_p})^{\ab}}{[C_1-C_2]} \oplus \mathbb{Z}.
    \]
    In particular, by \Cref{lem:relation},
    \[r_p(\pi_1({\mathcal{G}'_p})^{ab})\ge r_p(\pi_1(\mathcal{G}_p)^\ab)-1=4-1=3.\]
    \par \smallskip
    
    Let $E_p=\bigoplus_{i=1}^m \mathbb{Z}/p^{n_i} \mathbb{Z}$ be the $p$-part of the primary decomposition of $\pi_1({\mathcal{G}'_p})^{\ab}$, and denote the projection $\pi_1({\mathcal{G}'_p})^{\ab} \twoheadrightarrow E_p$ by $\pi_p$. By Smith normal form, up to applying an automorphism of $E_p$, we may assume that the $2$-generated subgroup $\pi_p(\langle [C_1],[C_2]\rangle )$ of $E_p$ lies in $\mathbb{Z}/p^{n_1}\mathbb{Z}\oplus \mathbb{Z}/p^{n_2}\mathbb{Z}$. Set $P=\bigoplus_{i=3}^m \mathbb{Z}/p^{n_i}\mathbb{Z}$, and note that $P$ is non-trivial since $r_p(\pi_1({\mathcal{G}'_p})^{ab})\ge3$. We have that 
    \[\pi_1(\mathcal{G}'_p)^{\ab}=P\oplus K\]
    for some direct factor $K$ containing $[C_1]$ and $[C_2]$.
\end{proof}

\section{Proof of the \hyperref[mainthm]{Main Theorem}}

In this section we prove our main result, restated here for convenience:

\begin{thm}
    [{\hyperref[mainthm]{Main Theorem}}]
    Let $G$ be a hyperbolic group that splits as a graph of virtually free groups with virtually cyclic edge groups. If $G$ is not virtually a free product of free and surface groups, then there is a sequence of nested finite index subgroups $G\ge G_1 \ge G_2 \ge \cdots$ such that $\bigcap_i G_i=\{1\}$ and such that for every prime $p$,
    \[
    \liminf_{i\rightarrow \infty} \frac{\log \abs{ \mathrm{Tor}_p(G_i^{\mathrm{ab}})}}{[G:G_i]} >0.
    \]
\end{thm}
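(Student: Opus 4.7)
The plan is to build $G \ge G_1 \ge G_2 \ge \cdots$ inductively in the fractal manner outlined in the introduction. Fix a cyclic vertex $C$ of $\mathcal{G}$, and, after replacing $\mathcal{G}$ by a finite-sheeted cover supplied by \Cref{claim:connected}, assume that some lift of $C$ has non-disconnecting removal. Enumerate the primes $p_1, p_2, \ldots$ and the non-trivial elements $g_1, g_2, \ldots$ of $G$. The inductive invariants for $G_n$ will be that $g_n \notin G_n$, and that $G_n^{\ab}$ admits a direct-sum decomposition $T_n \oplus K_n$ with $T_n = T_n^{(1)} \oplus \cdots \oplus T_n^{(n)}$, each $T_n^{(i)}$ a $p_i$-group of log-size at least $\epsilon_i [G:G_n]$; \Cref{claim:lift-cutset} continues to supply a non-disconnecting lift $C^{(n-1)}$ of $C$ inside $\mathcal{G}_{n-1}$.

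At step $n$, I would construct two building blocks. A \emph{$p_n$-torsion piece} $\mathcal{P}_n$, via \Cref{lem:beads} applied inside $\mathcal{G}_{n-1}$ at $C^{(n-1)}$: it has two hanging cyclic vertices $C_1^{(n)}, C_2^{(n)}$ and abelianization $P_n \oplus L_n$, with $P_n$ a non-trivial $p_n$-group and $[C_1^{(n)}], [C_2^{(n)}] \in L_n$. A \emph{$G_{n-1}$-piece} $\mathcal{Q}_n$, obtained by excising $C^{(n-1)}$ from a finite-sheeted cover of $\mathcal{G}_{n-1}$ (using non-disconnectedness to keep $\mathcal{Q}_n$ connected) and retaining the hanging elevations of the incident edges; by the inductive direct-sum invariant, $\mathcal{Q}_n^{\ab}$ inherits each $T_{n-1}^{(i)}$ as a direct summand, with the peripheral classes of the hanging cyclic vertices lying in the complementary part. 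Finally, use omnipotence (\Cref{thm:omni}) at the malnormal peripheral structures (guaranteed by \Cref{prop:normal_form}) of the non-cyclic vertices of $\mathcal{P}_n$ and $\mathcal{Q}_n$ bordering these hanging vertices, to pass to refinements in which all hanging elevations have matching degrees; the last clause of \Cref{thm:omni} arranges this refinement compatibly with earlier towers.

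To build $\mathcal{G}_n$: take a finite-sheeted cover of $\mathcal{G}_{n-1}$ deep enough to exclude $g_n$ (by residual finiteness), cut it open along two lifts of $C^{(n-1)}$, and splice in two chains between the resulting pairs of boundary cyclic vertices — $N_n$ copies of $\mathcal{P}_n$ glued in series for one chain, $M_n$ copies of $\mathcal{Q}_n$ for the other. The resulting connected precover has all vertex groups of finite index in their targets and so completes to a finite-sheeted cover $\mathcal{G}_n$ of $\mathcal{G}$ by Wise's subgroup separability; set $G_n = \pi_1(\mathcal{G}_n)$. By construction, every relation introduced by a gluing identifies peripheral classes that lie in the $L_n$- or $K$-parts of the glued pieces, never touching the direct summands $P_n$ in $\mathcal{P}_n$ or $T_{n-1}^{(i)}$ in $\mathcal{Q}_n$. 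Hence the direct summands $\bigoplus_{N_n} P_n$ and $\bigoplus_{M_n} T_{n-1}^{(i)}$ (for each $i < n$) survive in $G_n^{\ab}$, yielding the updated invariant with $T_n^{(n)} = \bigoplus_{N_n} P_n$ and $T_n^{(i)} = \bigoplus_{M_n} T_{n-1}^{(i)}$. The total index is $\Theta(N_n d(\mathcal{P}_n) + M_n d(\mathcal{Q}_n))$, so choosing $N_n$ and $M_n$ of comparable linear size preserves every previous $\epsilon_i$ and produces a fresh $\epsilon_n > 0$.

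The main obstacle is the propagation of the direct-summand invariant through the iterative gluings and completions while simultaneously handling every prime. Each gluing is a potential threat to accumulated torsion, and the argument must rely on the key structural feature of \Cref{lem:beads} — that the peripheral classes lie in the complementary factor — being reproduced at every stage of the induction. Coupled with the need to align elevation degrees via \Cref{thm:omni} compatibly across all primes and previous towers, and to realize the resulting precover as a finite-sheeted cover of controlled index that excludes $g_n$, this is the principal technical content of the proof.
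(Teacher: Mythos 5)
Your overall strategy is the paper's: build $G_n$ from $G_{n-1}$ by inductively chaining copies of the $p_n$-torsion piece from \Cref{lem:beads} together with copies of a modified $G_{n-1}$, aligning elevation degrees via omnipotence (\Cref{thm:omni}), completing the precover by subgroup separability, excluding $g_n$ via a deep finite-index subgroup, and balancing the chain lengths against the index.

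The gap lies in the propagation of the inductive invariant and in the pre-degree of your $G_{n-1}$-piece $\mathcal{Q}_n$. You assert that $\mathcal{Q}_n^{\ab}$ inherits each $T_{n-1}^{(i)}$ as a direct summand with the new peripheral classes in the complement ``by the inductive direct-sum invariant,'' but this does not follow. If $\mathcal{Q}_n$ is obtained by excising a cyclic vertex, then $G_{n-1}^{\ab}$ is a quotient of $Q_n^{\ab}$ (modulo the edge relations, plus a free summand), and direct-sum decompositions do not lift along quotients; to rebuild a decomposition of $Q_n^{\ab}$ with the new peripherals in the complement you must rerun the Smith-normal-form argument of \Cref{lem:beads}, which costs a bounded but non-zero amount of $p_i$-rank for \emph{every} $i<n$ at each stage. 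Without quantitative control of this cumulative loss, the lower bounds $\epsilon_i$ degenerate. The paper carries no direct-sum invariant for $G_{n-1}$; it works with the $p$-rank inequality of \Cref{lem:relation} and an inductive estimate carrying a telescoping factor $\prod_{j>i}\bigl(1-2^{-j}\bigr)$, which absorbs the per-step loss while converging to a positive limit. Separately, your index bookkeeping requires $\coind{\mathcal{G}_{n-1}:\mathcal{Q}_n}=1$ (or summably close to it): if $\mathcal{Q}_n$ is cut out of a finite-sheeted cover of $\mathcal{G}_{n-1}$ of degree $c>1$, then even as $M_n/N_n\to\infty$ each earlier $\epsilon_i$ gets multiplied by roughly $1/c$, and an infinite product of such factors vanishes. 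The paper avoids this by forming $\mathcal{G}'_{n-1}$ from $\mathcal{G}_{n-1}$ itself by detaching a \emph{single} edge from a cyclic vertex, which has pre-degree exactly $1$. These two bookkeeping points --- rank loss per gluing and pre-degree of the recycled piece --- are precisely the ``principal technical content'' you flag at the end, and they are not discharged by the proposed invariant as stated.
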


Before turning to the proof, let us describe the general scheme of our inductive construction. Each stage combines two main ingredients: omnipotence (see \Cref{thm:omni}), which allows us to glue precovers with hanging elevations of mismatched degrees into genuine finite-sheeted covers, and the $p$-torsion pieces from \Cref{sec:pieces}, which contribute new $\mathbb{Z}/p\mathbb{Z}$ summands to homology. The group $G_n$ is assembled by attaching two chains to a large precover of $\mathcal{G}_{n-1}$: 
\begin{enumerate}
    \item A chain of copies of a $p_n$-torsion piece constructed out of $\mathcal{G}_{n-1}$ using \Cref{lem:beads}.
    \item A chain of copies of a precover $\mathcal{G}'_{n-1}$ obtained by disconnecting an edge of $\mathcal{G}_{n-1}$.
\end{enumerate}
The first chain introduces a $p_n$-part to the abelianization, whereas the second preserves the torsion constructed in the earlier stages. \par \smallskip

The schematic illustration given in \Cref{fig:intro} of the introduction should be kept in mind, as it captures the essential idea of the construction. For the reader’s convenience, \Cref{fig:construction} below depicts a further iteration of the process, with the various pieces labeled according to the notation used in the proof.

\begin{figure}[h!]
    \centering
    \makebox[\textwidth][c]{\includegraphics[width=1.1\textwidth]{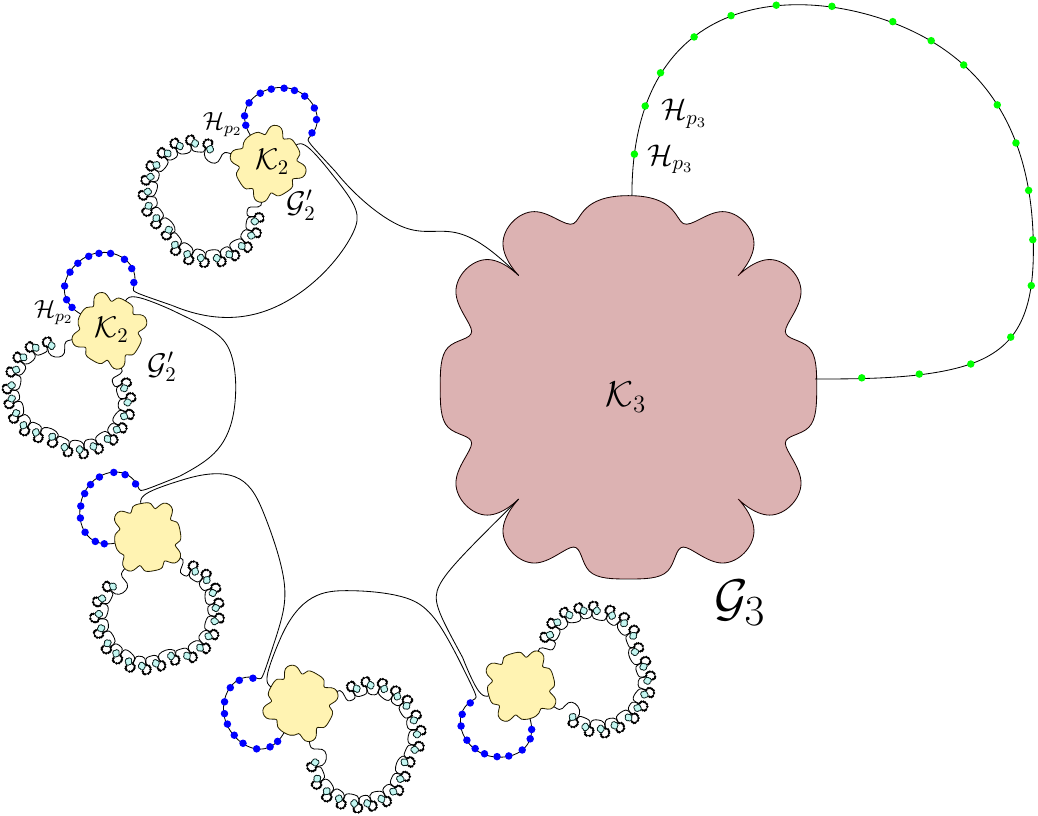}}
    \caption{A further iteration of the construction in \Cref{fig:intro}.}
    \label{fig:construction}
\end{figure}

We also introduce auxiliary notation, which will help in simplifying calculations in the proof of the \hyperref[mainthm]{Main Theorem}. 

\begin{notation}
    Let $\mathcal{G}$ be a graph of groups and let $\mathcal{G}'$ be a precover of $\mathcal{G}$. The \emph{pre-degree} of the precover $f:\mathcal{G}'\longrightarrow \mathcal{G}$ is defined as
    \[\coind{\mathcal{G}:\mathcal{G}'}= \max_{v\in \mathrm{V}(\Gamma)} \left(\sum_{v'\in f^{-1}(v)} [G_v:G_{v'}]\right).
    \]
    Intuitively, the pre-degree $\coind{\mathcal{G}:\mathcal{G}'}$ is a lower bound for the degree of any completion of $\mathcal{G}'$ into a cover of $\mathcal{G}$.
\end{notation}

\begin{rmk} \label{rmk:coind}
    If $\mathcal{G}$ is a hyperbolic graph of free groups with cyclic edges, and $\coind{\mathcal{G}:\mathcal{G}'}<\infty$, then $\mathcal{G}'$ can be completed to a finite-sheeted cover $\mathcal{H}\twoheadrightarrow \mathcal{G}$. In this case,
    \[
    \coind{\mathcal{G}:\mathcal{G}'}\le\deg(\mathcal{H}\twoheadrightarrow \mathcal{G})=[G:H].
    \]
    Moreover, if $\mathcal{G}_1,\ldots,\mathcal{G}_n$ are precovers of $\mathcal{G}$, and $\mathcal{G}'$ is obtained from $\mathcal{G}_1,\ldots,\mathcal{G}_n$ by pairing some of their hanging elevations, then
    \[
    \coind{\mathcal{G}:\mathcal{G}'}\le\coind{\mathcal{G}: \mathcal{G}_1} +\cdots+ \coind{\mathcal{G}:\mathcal{G}_n}.
    \]
    Similarly, if instead  $\mathcal{X}'$ is obtained by identifying vertex groups of different precovers in the collection $\mathcal{G}_1,\ldots,\mathcal{G}_n$, then
    \[\coind{\mathcal{G}:\mathcal{G}'}\le \coind{\mathcal{G}:\mathcal{G}_1}+\cdots+\coind{\mathcal{G}:\mathcal{G}_n}.\]
\end{rmk}

\begin{proof}[Proof of the {\hyperref[mainthm]{Main Theorem}}]

    For ease of bookkeeping, we continue with the following convention: calligraphic letters (e.g. $\mathcal{G}, \mathcal{H}, \mathcal{K}, \mathcal{L}$) denote graphs of groups, while the corresponding roman letters ($G,H,K,L$) denote their fundamental groups, with subscripts and superscripts matching accordingly. For the underlying graphs we use $\Gamma$ for $\mathcal{G}$ and $\Lambda$ for $\mathcal{L}$. \par \smallskip

    \begin{enumerate}[label=\textbf{Step \arabic*:}, leftmargin=0.6cm]

    \item \textbf{Preparations.} We begin by passing to a finite-index subgroup $G_0\le G$, which will serve as a convenient starting point for our construction. Since this adjustment affects the homological torsion growth of $G$ only by a multiplicative constant, it suffices to construct the sequence starting with $G_0$. \par \smallskip
    
    By \cite[Theorem 5.1]{wise:graph-sep} $G$ is  residually finite, and hence virtually torsion free. We may therefore replace $G$ with a finite-index subgroup that splits as a graph of free groups with cyclic edge groups, and which is not isomorphic to a free product of free and surface groups. As in the beginning of \Cref{sec:pieces}, $G$ admits a finite-index subgorup $G'$ that satisfies the four properties of \Cref{prop:normal_form}. \par \smallskip

    Fix a non-cyclic vertex $v$ of $\mathcal{G}'$. Finite-index subgroups of $G_v$ will serve as ``connectors'' in our construction, allowing us to connect torsion pieces and copies of previously constructed groups to precovers of $G$. Using \Cref{claim:connected}, we may assume that there are two cyclic vertices adjacent to $v$, whose removal does not disconnect $\Gamma'$. We call them $C_{\mathrm{tor}}$ and $C_{\mathrm{prev}}$. New torsion pieces will always be connected (via a connector) to lifts of $C_{\mathrm{tor}}$; copies of previously constructed groups will always be connected (again, via a connector) to lifts of $C_{\mathrm{prev}}$. \par \smallskip

    Let $(G'_v,[\underline{w}'_v=\{w_1,\ldots,w_m\}])$ be the induced pair at $G'_v$, so that the conjugacy class $[w'_1]$ is attached to $C_{\mathrm{tor}}$ via an edge $e_{\mathrm{tor}}\in \mathrm{E}(\Gamma')$, and $[w'_2]$ is attached to $C_{\mathrm{prev}}$ via $e_{\mathrm{prev}} \in \mathrm{E}(\Gamma')$. Since $G'$ is in normal form, the pair $(G'_v,[\underline{w}'_v])$ is malnormal, and we may use Wise's omnipotence as in \Cref{thm:omni} to generate finite-index pull-back subpairs of $(G'_v,[\underline{w}'_v])$ whose peripheral structure consists of elevations of $[\underline{w}'_v]$ of prescribed degrees. \par \smallskip

    Write $M$ for the omnipotence constant of the pair $(G'_v,[\underline{w}'_v])$, and let $(G_0)_v$ be the finite-index subgroup of $G'_v$ obtained from \Cref{thm:omni} for the collection of degrees $\underline{d}=(1,\ldots,1)$, so that every elevation of a conjugacy class in $[\underline{w}'_v]$ to $(G_0)_v$ is of degree $M$. The finite-index pull-back pair $((G_0)_v,[(\underline{w}_0)_v])$ satisfies a restricted version of omnipotence: for any pair of positive numbers $d_{\mathrm{tor}}$ and $d_{\mathrm{prev}}$, there exists a finite-index subgroup $(G_0)_v(d_{\mathrm{tor}}, d_{\mathrm{prev}})$ satisfying the following:
    \begin{enumerate}
        \item If $[w_{\mathrm{tor}}]$ is an elevation of $[w'_1]$ to $(G_0)_v(d_{\mathrm{tor}}, d_{\mathrm{prev}})$, then
        \[
        \deg_{(G_0)_v}([w_{\mathrm{tor}}])=d_{\mathrm{tor}}.
        \]
        \item Similarly, if $[w_\mathrm{prev}]$ is an elevation of $[w'_2]$ to $(G_0)_v(d_{\mathrm{tor}}, d_{\mathrm{prev}})$, then
        \[
        \deg_{(G_0)_v}([w_{\mathrm{prev}}])=d_{\mathrm{prev}}.
        \]
    \end{enumerate}
    By the refined version of Wise's omnipotence in \Cref{thm:omni}, the same will apply to any finite-index subgroup of $(G_0)_v$ obtained by further applications of omnipotence. \par \smallskip

    Finally, $G'$ is subgroup separable by \cite[Theorem 5.1]{wise:graph-sep}; we may therefore complete $(G_0)_v$ to a finite-index subgroup $G_0 \le G'$. We still refer to the vertex of $\mathcal{G}_0$ whose vertex group is $(G_0)_v$ by $v$. In preparation for the induction, let $p_1,p_2,\ldots$ be an enumeration of the primes, and let $g_1,g_2,\ldots$ be an enumeration of the elements of $G_0$. \par \smallskip
    
    \item \textbf{Base case: constructing $G_1$.} We remark that the construction of $G_1$ could in principle be simplified, but we present it in a form parallel to the inductive step for clarity. \par \smallskip

    \noindent \textbf{The base group $L_1$.} Let $L_1 \le G_0$ be a finite-index subgroup which excludes $g_1$, and such that there is a vertex $v^1\in \mathrm{V}(\Lambda_1)$ lying above $v$ with
    \begin{equation} \label{eq:dist}
        d_{\Lambda_1}(p_{\mathcal{L}_1}, v^1) > \norm{g_1}_{\mathcal{G}_0}+1.    
    \end{equation}
    Here, $\norm{g_1}_{\mathcal{G}_0}$ denotes the path length of $g_1$ in the underlying graph $\Gamma_0$, and $p_{\mathcal{L}_1}$ is the base vertex of $\mathcal{L}_1$. By \Cref{claim:lift-cutset}, there are two cyclic vertices $C^1_{\mathrm{tor}}$ and $C^1_{\mathrm{prev}}$ that lie above $C_{\mathrm{tor}}$ and $C_{\mathrm{prev}}$ respectively, which are adjacent to $v^1$ and do not form a cut set of $\Lambda_1$. Condition \ref{eq:dist} implies that if we disconnect $\mathcal{L}_1$ at $C^1_{\mathrm{tor}}$ and $C^1_{\mathrm{prev}}$, and attach in place other precovers of $\mathcal{G}_0$, $g_1$ will not lie in the resulting group.\par \smallskip
    
    Such a subgroup $L_1\le G_0$ always exists: indeed, by \Cref{claim:connected}, after possibly passing to a finite-index subgroup of $G_0$, we may assume that $\pi_1(\Gamma_0)$ is free of rank at least $2$. We may then choose a finite-sheeted normal cover $\Lambda_1 \twoheadrightarrow \Gamma_0$ in which a lift of the base vertex $p_{\mathcal{G}_0}$ and a lift of $v$ lie at distance greater than $\norm{g_1}_{\mathcal{G}}+1$. Replacing each vertex of $\Lambda_1$ by the corresponding vertex group of $\mathcal{G}_0$ produces a finite-sheeted cover $\mathcal{L}_1$ of $\mathcal{G}_0$. If $g_1 \in L_1$, residual finiteness allows us to pass to a further finite-index normal subgroup in which $g_1$ does not lie (and in which \ref{eq:dist} above is still satisfied). \par \smallskip

    \noindent \textbf{The setup.} Recall that in $\mathcal{G}'$, the conjugacy class $[w'_1]$ was attached to the cyclic vertex $C_{\mathrm{tor}}$ via the edge $e_{\mathrm{tor}}$, and the conjugacy class $[w'_2]$ was attached to the cyclic vertex $C_{\mathrm{prev}}$ via the edge $e_{\mathrm{prev}}$. We fix the following notation:
    \begin{enumerate}
        \item $e^1_{\mathrm{tor}}$ is the edge of $\mathrm{E}(\Lambda_1)$ that lies above $e_{\mathrm{tor}}$, and connects $v^1$ to $C^1_{\mathrm{tor}}$. The corresponding conjugacy class of the vertex group $(L_1)_{v^1}$ is denoted $[w^1_{\mathrm{tor}}]$, and its degree over the corresponding conjugacy class in $(G_0)_v$ is $d^1_{\mathrm{tor}}$.
        \item Similarly, the edge $e^1_{\mathrm{prev}}\in \mathrm{E}(\Lambda_1)$ lies above $e_{\mathrm{prev}}$ and connects $v^1$ to $C^1_{\mathrm{prev}}$. The corresponding conjugacy class of $(L_1)_{v^1}$ is denoted $[w^1_{\mathrm{prev}}]$, and its degree over $(G_0)_v$ is $d^1_{\mathrm{prev}}$.
    \end{enumerate}

    Note that since $\mathcal{G}'$ is in normal form, the stars of $C_{\mathrm{tor}}$ and $C_{\mathrm{prev}}$ are preserved in every finite-sheeted cover of $\mathcal{G}'$. We will therefore keep using the notation above in every step of the construction, replacing the superscript $1$ with $n$ at the $n$-th iteration. \par \smallskip

    \noindent \textbf{Constructing a $p_1$-torsion piece}. We apply \Cref{lem:beads} to $\mathcal{L}_1$. Let $\mathcal{H}_{p_1}$ be the $p_1$-torsion piece constructed from $\mathcal{L}_1$ with respect to the vertex group $C^1_{\mathrm{prev}}$. If $C^{p_1}_{1}$ and $C^{p_1}_2$ are its two cyclic vertices admitting hanging elevations, we choose $\mathcal{H}_{p_1}$ so that the only hanging elevation at $C^{p_1}_1$ corresponds to the edge $e^1_{\mathrm{prev}}$. Denote its degree, over $\mathcal{L}_1$, by $d_{p_1}$. \par \smallskip

    \noindent \textbf{Assembling $\mathcal{G}_1$.} We continue by disconnecting the edge $e^1_{\mathrm{tor}}$ from $v^1$, and introducing a new non-cyclic vertex $v^1_{\mathrm{tor}}$ that will be connected to $C^1_{\mathrm{tor}}$ in its place. By \Cref{thm:omni}, there is a finite-index subgroup $G^1_{\mathrm{tor}}$ of $(G_0)_v$ such that
    \begin{enumerate}
        \item every elevation of $[w^1_{\mathrm{tor}}]$ to $G^1_{\mathrm{tor}}$ is of degree $d^1_{\mathrm{tor}}$, and
        \item every elevation of $[w^1_{\mathrm{prev}}]$ to $G^1_{\mathrm{tor}}$ is of degree $d_{p_1}$.
    \end{enumerate}
    This implies that we can connect $G^1_{\mathrm{tor}}$ to $C^1_{\mathrm{tor}}$ via an elevation of $[w^1_{\mathrm{tor}}]$, and to $\mathcal{H}_{p_1}$ via an elevation of $[w^1_{\mathrm{prev}}]$, as in \Cref{fig:G1}. The resulting precover of $\mathcal{G}_0$ is denoted $\mathcal{K}'_1$. Note that $\mathcal{K}'_1$ is connected by \Cref{claim:lift-cutset}. \par \smallskip

    By \cite[Theorem 5.1]{wise:graph-sep}, we may complete $\mathcal{K}'_1$ to a finite-sheeted cover $\mathcal{K}''_1$ of $\mathcal{G}_0$. Let $\mathcal{K}_1$ be the (possibly disconnected) precover of $\mathcal{G}_0$ obtained by removing $\mathcal{H}_{p_1}$ from $\mathcal{K}''_1$. The last step in the construction of $G_1$ is to attach to $\mathcal{K}_1$ a long ``chain'' of copies of $\mathcal{H}_{p_1}$, obtained by identifying the cyclic vertex $C^{p_1}_1$ of one copy with the cyclic vertex $C^{p_1}_2$ of the next. Take $\alpha$ copies of $\mathcal{H}_{p_1}$ so that
    \[
    \frac{\alpha \cdot \coind{\mathcal{G}_0:\mathcal{H}_{p_1}}}{\alpha \cdot \coind{\mathcal{G}_0:\mathcal{H}_{p_1}} + \coind{\mathcal{G}_0:\mathcal{K}_1} } \ge \frac{1}{2^{1+1}} = \frac{1}{4}. 
    \]
    To assemble $G_1$ simply splice these $\alpha$ copies of $\mathcal{H}_{p_1}$ and join them to $\mathcal{K}_1$ as in \Cref{fig:G1}. Note that by our choice of $\mathcal{L}_1$, we have that $g_1 \notin G_1$. \par \smallskip

    \begin{figure}[h]
        \centering
        \includegraphics[width=0.75\linewidth]{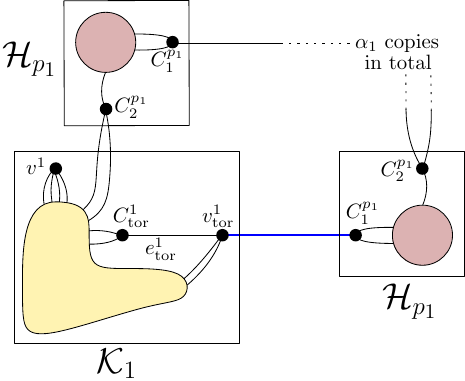}
        \caption{The construction of $\mathcal{G}_1$. The irrelevant parts of $\mathcal{K}_1$ and of the copies of $\mathcal{H}_{p_1}$ appear in yellow and red. Note that the blue edge, connecting $v^1_{\mathrm{tor}}$ to $\mathcal{H}_{p_1}$, is a lift of $e_{\mathrm{prev}}$.}
        \label{fig:G1}
    \end{figure}
    
    \noindent \textbf{Computing homological torsion.} By the second item of \Cref{lem:beads}, each copy of $\mathcal{H}_{p_1}$ contributes direct factor which is a $p_1$-group to homology. A standard Mayer-Vietoris argument then gives
    \begin{align*}
    \frac{\log_{p_1} \abs{ \mathrm{Tor}_p(G_1^{\mathrm{ab}})}}{[G_0:G_1]} & \ge\frac{\log_{p_1} \abs{ \mathrm{Tor}_p((\mathbb{Z}/p_1\mathbb{Z})^\alpha)}}{[G_0:G_1]} 
    \\ & = \frac{\alpha}{\alpha \cdot \coind{\mathcal{G}_0:\mathcal{H}_{p_1}} + \coind{\mathcal{G}_0:\mathcal{K}_1}} 
    \\ &\ge \left(\frac{1}{2^{1+1}}\right)\cdot \frac{1}{\coind{\mathcal{G}_0:\mathcal{H}_{p_1}}}.
    \end{align*}

    \item \textbf{The induction hypothesis.} We clearly lay out the inductive assumptions. Suppose we have constructed the groups $G_0,\ldots,G_{n-1}$, such that for every $k\le n-1$ the following holds:
    \begin{enumerate}
        \item $g_1,\ldots, g_k \notin G_k$.
        \item $\mathcal{G}_k$ is assembled from the following precovers of $\mathcal{G}_{k-1}$:
        \begin{enumerate}
            \item $\mathcal{K}_k$, a (possibly disconnected) precover that contains two non-cyclic vertices $v^k_{\mathrm{tor}}$ and $v^k_{\mathrm{prev}}$ that lie above $v$. The vertex groups corresponding to $v^k_{\mathrm{tor}}$ and $v^k_{\mathrm{prev}}$ were obtained using omnipotence, so that in each of them all of the elevations of $[w_{\mathrm{tor}}]$ share the same degree; similarly, in each of them, all of the elevations of $[w_{\mathrm{prev}}]$ share the same degree.
            \item A chain of $\alpha_k$ copies of a $p_k$-torsion piece $\mathcal{H}_{p_k}$, a precover of $\mathcal{G}_{k-1}$, constructed using \Cref{lem:beads}.
            \item For $k>1$, a chain of $\beta_k$ copies of $\mathcal{G}_{k-1}$, each obtained by detaching an edge $e^{k-1}_{\mathrm{tor}}$ from the cyclic vertex $C^{k-1}_{\mathrm{tor}}$ that lies above $C_{\mathrm{tor}}$ in $\mathcal{G}_{k-1}$.
        \end{enumerate}
        The first chain is attached to $\mathcal{K}_k$ via $v^k_{\mathrm{tor}}$, and the second chain is attached to $\mathcal{K}_k$ via $v^k_{\mathrm{prev}}$. We refer the reader to \Cref{fig:construction} which depicts this configuration.
        \item For every $i\le k$, 
         \begin{equation} \label{ineq:IH}
        \frac{\log_{p_i}\abs{\mathrm{Tor}_{p_i}(G_k)}}{[G:G_k]}\ge \frac{\prod_{j=i+1}^k\bigg(1-\frac{1}{2^j}\bigg)}{2^{i+1}\cdot \coind{\mathcal{G}_0:\mathcal{H}_{p_i}}}
        \end{equation}
        
    \end{enumerate}

    \item \textbf{Inductive step.} 
    The construction is similar to that in the base case, with one change: we need to preserve the torsion constructed in the previous stages, so $\mathcal{G}_n$ will also contain copies of the previously constructed $\mathcal{G}_{n-1}$. \par \smallskip
    
    \noindent \textbf{The base group $L_n$}. We begin by taking a finite-index subgroup $L_n\le G_{n-1}$ which excludes $g_1,\ldots,g_n$. As in the base case, we want to guarantee that if we disconnect $\mathcal{L}_n$ at the cyclic vertices $C^n_{\mathrm{tor}}$ and $C^n_{\mathrm{prev}}$, and connect other precovers of $\mathcal{G}_{n-1}$ in place,  the elements $g_1,\ldots,g_n$ will still be excluded from the resulting group. To do so, we pick a finite-index subgroup $L_n \le G_{n-1}$ such that
 
    \begin{enumerate}
        \item $g_1,\ldots,g_n \notin L_n$.
        \item Let $p_{\mathcal{L}_n}\in \mathrm{V}(\Lambda_n)$ be the base vertex of $\mathcal{L}_n$. Then there is a vertex $v^n\in \mathrm{V}(\Lambda_n)$ that lies above $v^{n-1}_{\mathrm{tor}}$ such that
        \[
        d_{\Lambda_n}(p_{\mathcal{L}_n},v^n)> \max\{\norm{g_1}_{\mathcal{G}_0},\ldots,\norm{g_n}_{\mathcal{G}_0}\} + 1.
        \]
    \end{enumerate}
    The argument used to obtain $\mathcal{L}_1$ from $\mathcal{G}_0$ in the base case shows that such a subgroup exists.
    \par \smallskip

    \noindent \textbf{The setup.} As in the base case, we fix the following notation:
    \begin{enumerate}
        \item $C^n_{\mathrm{tor}}$ and $C^n_{\mathrm{prev}}$ are cyclic vertices of $\mathcal{L}_n$ that are adjacent to $v^n$ and which lie above $C^{n-1}_{\mathrm{tor}}$ and $C^{n-1}_{\mathrm{prev}}$. By \Cref{claim:lift-cutset}, we may assume that removing them from $\mathcal{L}_n$ does not disconnect $\Lambda_n$.
        \item $e^n_{\mathrm{tor}}\in \mathrm{E}(\Lambda_n)$ is the edge that connects $v^n$ to $C^n_{\mathrm{tor}}$ and which lies above $e_{\mathrm{tor}}$. We denote the corresponding conjugacy class in $(L_n)_{v^n}$ by $[w^n_{\mathrm{tor}}]$, and its degree over the corresponding conjugacy class in the vertex group $(G_{n-1})_{v^{n-1}_{\mathrm{tor}}}$ of $\mathcal{G}_{n-1}$ by $d^n_{\mathrm{tor}}$.
        \item Similarly, the edge $e^n_{\mathrm{prev}}$ connects $v^n$ to $C^n_{\mathrm{prev}}$ and lies above $e_{\mathrm{prev}}$. Its corresponding conjugacy class in $(L_n)_{v^n}$ is denoted $[w^n_{\mathrm{prev}}]$, and its degree over $(G_{n-1})_{v^{n-1}_{\mathrm{tor}}}$ is denoted $d^n_{\mathrm{prev}}$.
    \end{enumerate}

    \noindent \textbf{Constructing a $p_n$-torsion piece.} Using \Cref{lem:beads}, we construct a $p_n$-torsion piece $\mathcal{H}_{p_n}$ out of $\mathcal{L}_n$, with respect to the cyclic vertex group $C^n_{\mathrm{prev}}$. Denote by $C^{p_n}_1$ and $C^{p_n}_2$ its cyclic vertex groups that admit hanging elevations. By construction, we may assume that the only hanging elevation at $C^{p_n}_1$ comes from the edge $e^n_{\mathrm{prev}}$.  We denote its degree, over the corresponding edge map in $\mathcal{L}_n$, by $d_{p_n}$. \par \smallskip

    \noindent \textbf{The connectors $G^n_{\mathrm{tor}}$ and $G^n_{\mathrm{prev}}$.} We next construct two finite-index subgroups $G^n_{\mathrm{tor}}$ and $G^n_{\mathrm{prev}}$ of the non-cyclic vertex group $G^{n-1}_{\mathrm{tor}}=(G_{n-1})_{v^{n-1}_{\mathrm{tor}}}$ of $\mathcal{G}_{n-1}$; this group played the role of $G^n_{\mathrm{tor}}$ in the previous step. These groups will be used to attach copies of $\mathcal{G}_{n-1}$ and $\mathcal{H}_{p_n}$ to the cyclic vertices $C^n_{\mathrm{prev}}$ and $C^n_{\mathrm{tor}}$. As in the base case, $v^n_{\mathrm{tor}}$ will be attached to the $p_n$-torsion piece $\mathcal{H}_{p_n}$ and to the cyclic vertex $C^n_{\mathrm{tor}}$. On the other hand, the base case does not include a construction analogous to that of $v^n_{\mathrm{prev}}$. This vertex will be connected to a copy of $\mathcal{G}_{n-1}$, and to the cyclic vertex $C^n_{\mathrm{prev}}$. \par \smallskip

    We begin with $G^n_{\mathrm{tor}}$, and let it be a finite-index subgroup of $G^{n-1}_{\mathrm{tor}}$ such that:
    \begin{enumerate}
        \item every elevation of $[w^1_{\mathrm{prev}}]$ to $G^n_{\mathrm{tor}}$ is of degree $d_{p_n}\cdot d^n_{\mathrm{prev}}$ over the corresponding conjugacy class in $G^{n-1}_{\mathrm{tor}}$, and
        \item every elevation of $[w^1_{\mathrm{tor}}]$ to $G^n_{\mathrm{tor}}$ is of degree $d^n_{\mathrm{tor}}$ (again, over the corresponding conjugacy class in $G^{n-1}_{\mathrm{tor}}$).
    \end{enumerate}
    The existence of such a subgroup is guaranteed by the refined version of omnipotence in \Cref{thm:omni}. Note that the chosen degrees imply that we may connect $G^n_{\mathrm{tor}}$ to the cyclic vertex $C^{p_n}_1$ of $\mathcal{H}_{p_n}$ via an elevation of $[w^1_{\mathrm{prev}}]$. Similarly, if we disconnect the edge $e^n_{\mathrm{tor}}$ from $v^n$ in $\mathcal{L}_n$, we may connect $G^n_{\mathrm{tor}}$ in place. \par \smallskip

    The construction of $G^n_{\mathrm{prev}}$ is similar: using \Cref{thm:omni}, it can be taken a finite-index subgroup of $G^{n-1}_{\mathrm{tor}}$ such that
    \begin{enumerate}
        \item every elevation of $[w^1_{\mathrm{prev}}]$ to $G^n_{\mathrm{tor}}$ is of degree $d^n_{\mathrm{prev}}$ over the corresponding conjugacy class in $G^{n-1}_{\mathrm{tor}}$, and
        \item every elevation of $[w^1_{\mathrm{prev}}]$ to $G^n_{\mathrm{tor}}$ is of degree $1$ (again, over the corresponding conjugacy class in $G^{n-1}_{\mathrm{tor}}$).
    \end{enumerate}

    As before, if we disconnect the edge $e^n_{\mathrm{prev}}$ from $v^n$ in $\mathcal{L}_n$, we may connect $G^n_{\mathrm{prev}}$ in place. Moreover, choose an edge of $\mathcal{G}_{n-1}$ that lies below $e^n_{\mathrm{tor}}$ and that is adjacent to $v^{n-1}_{\mathrm{tor}}$. Disconnect it from its incident cyclic vertex, and denote the resulting precover of $\mathcal{G}_{n-1}$ by $\mathcal{G}'_{n-1}$. We may now connect $G^n_{\mathrm{prev}}$ to $\mathcal{G}'_{n-1}$. We point out that the precover $\mathcal{G}'_{n-1}$ will be used later in the construction, and that $\coind{\mathcal{G}_{n-1}:\mathcal{G}'_{n-1}}=1$. \par \smallskip

    \noindent \textbf{Assembling $\mathcal{G}_n$.} Disconnect the edges $e^n_{\mathrm{tor}}$ and $e^n_{\mathrm{prev}}$ from $v^n$, and connect to $C^n_{\mathrm{tor}}$ and $C^n_{\mathrm{prev}}$ instead:
    \begin{enumerate}
        \item A new non-cyclic vertex $v^n_{\mathrm{tor}}$ with vertex group $G^n_{\mathrm{tor}}$. $v^n_{\mathrm{tor}}$ is connected to $C^n_{\mathrm{tor}}$ via $e^n_{\mathrm{tor}}$.
        \item A new non-cyclic vertex $v^n_{\mathrm{prev}}$ with vertex group $G^n_{\mathrm{prev}}$. Likewise, $v^n_{\mathrm{prev}}$ is connected to $C^n_{\mathrm{prev}}$ (via $e^n_{\mathrm{prev}}$).
    \end{enumerate}
    Recall that \Cref{claim:lift-cutset} implied that $C^n_{\mathrm{tor}}$ and $C^n_{\mathrm{prev}}$ did not form a cut set of $\Lambda_n$. This implies that the resulting precover of $\mathcal{L}_n$ is connected. Continue by attaching a copy of $\mathcal{H}_{p_n}$ to $v^n_{\mathrm{tor}}$, and a copy of $\mathcal{G}_{n-1}$ to $v^n_{\mathrm{prev}}$, as laid out in the previous step. This results in a connected precover $\mathcal{K}'_n$ of $\mathcal{G}_{n-1}$. Subgroup separability now implies that we may complete $\mathcal{K}'_n$ to a finite-sheeted cover $\mathcal{K}''_n$ of $\mathcal{G}_{n-1}$. The (possibly disconnected) precover $\mathcal{K}_n$ of $\mathcal{G}_{n-1}$ is obtained by removing $\mathcal{H}_{p_n}$ and $\mathcal{G}'_{n-1}$.  \par \smallskip

    \begin{figure}[h!]
        \centering
        \makebox[\textwidth][c]{\includegraphics[width=1.2\textwidth]{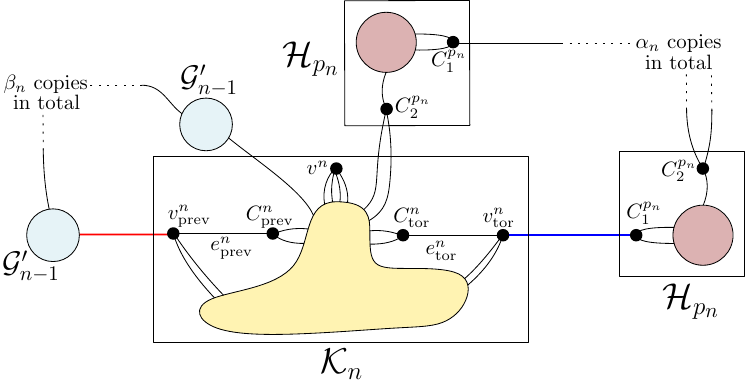}}
        \caption{The construction of $\mathcal{G}_n$. The irrelevant parts of $\mathcal{K}_n$ and of the copies of $\mathcal{H}_{p_n}$ appear in yellow and red; the copies of $\mathcal{G}'_{n-1}$ are coloured blue. Note that the blue edge connecting $v^n_{\mathrm{tor}}$ to a copy of $\mathcal{H}_{p_n}$ lies above $e_{\mathrm{prev}}$, and the red edge connecting $v^n_{\mathrm{prev}}$ to a copy of $\mathcal{G}'_{n-1}$ lies above $e_{\mathrm{tor}}$.}
        \label{fig:Gn-pieces}
    \end{figure}
    
    To construct $G_n$, we attach two long ``chains'' to $\mathcal{K}_n$:
    \begin{enumerate}
        \item A chain of copies of $\mathcal{G}'_n$, that will ensure that the previously constructed homological torsion is preserved in $G_n$.
        \item A chain of copies of $\mathcal{H}_{p_n}$, that will introduce $p_n$-homological torsion to $G_n$.
    \end{enumerate}
    \smallskip
    Gather $\alpha_n$ copies of $\mathcal{H}_{p_n}$ and $\beta_n$ copies of $\mathcal{G}'_{n-1}$ so that
    \[
    \frac{\alpha_n \cdot \coind{\mathcal{G}_{n-1}:\mathcal{H}_{p_n}}}{\alpha_n \cdot \coind{\mathcal{G}_{n-1}:\mathcal{H}_{p_n}} + \beta_n \cdot \coind{\mathcal{G}_{n-1}:\mathcal{G}'_{n-1}} + \coind{\mathcal{G}_{n-1}:\mathcal{K}_n} } \ge \frac{1}{2^{n+1}} 
    \]
    and
    \begin{equation*}
         \frac{\beta_n}{[G_{n-1}:G_n]} \ge \frac{\beta_n \cdot \coind{\mathcal{G}_{n-1}:\mathcal{G}'_{n-1}}}{\alpha_n \cdot \coind{\mathcal{G}_{n-1}:\mathcal{H}_{p_n}} + \beta_n \cdot \coind{\mathcal{G}_{n-1}:\mathcal{G}'_{n-1}} + \coind{\mathcal{G}_{n-1}:\mathcal{K}_n}}  \ge 1-\frac{1}{2^n}.
    \end{equation*}

    Splice the $\alpha_n$ copies of $\mathcal{H}_{p_n}$ and $\beta_n$ copies of $\mathcal{G}'_{n-1}$ to form two chains, and attach these chains to $\mathcal{K}_n$. This results in a connected cover $\mathcal{G}_n$ of $\mathcal{G}_{n-1}$. We hope that \Cref{fig:Gn-pieces} will elucidate this construction. \par \smallskip
    
    \noindent \textbf{Homological torsion.} We compute the homological torsion of $\mathcal{G}_n$ at the primes $p_1,\ldots,p_n$. Note that by construction, the inequality $\ref{ineq:IH}$ from the induction hypothesis also applies to $\mathcal{G}'_{n-1}$. Hence, for every $k<n$,
    \begin{align*}
    \frac{\log_{p_k}\abs{\mathrm{Tor}_{p_k}(G_n^{\ab})}}{[G_0:G_n]} &\ge \frac{\beta_n \cdot \log_{p_k}\abs{\mathrm{Tor}_{p_k}(G_{n-1}^{\ab})}}{[G_0:G_{n-1}]\cdot [G_{n-1}:G_n]} 
    \\ & = \left(\frac{\beta_n}{[G_{n-1}:G_n]}\right)\cdot \left(\frac{\log_{p_k}\abs{\mathrm{Tor}_{p_k}(G_{n-1}^{\ab})}}{[G_0:G_{n-1}]}\right) 
    \\ &\ge \left(1-\frac{1}{2^n}\right)  \cdot \frac{\prod_{j=k+1}^n\left( 1- \frac{1}{2^j}\right)}{2^{k+1}\cdot \coind{\mathcal{G}_0:\mathcal{H}_{p_k}}} 
    \\ &=  \frac{\prod_{j=k+1}^{n+1}\left( 1- \frac{1}{2^j}\right)}{2^{k+1}\cdot \coind{\mathcal{G}_0:\mathcal{H}_{p_k}}} 
    \end{align*}

    Recall that by \Cref{lem:beads}, $H_{p_n}^{\ab}$ contains a direct factor which is a $p_n$-group, and which persists after gluing $\mathcal{H}_{p_n}$ to other precovers of $\mathcal{G}_{n-1}$. Therefore, at $p_n$,
    \begin{align*}
    \frac{\log_{p_n}\abs{\mathrm{Tor}_{p_n}(G_n^{\ab})}}{[G_0:G_n]} &\ge \frac{\alpha_n \cdot \log_{p_n}\abs{\mathrm{Tor}_{p_n}(H_{p_n}^{\ab})}}{[G_0:G_n]} 
    \\ & \ge \frac{\alpha_n}{[G_0:G_{n-1}]\cdot[G_{n-1}:G_n]}
    \\ & \ge \frac{\alpha_n}{[G_0:G_{n-1}]\cdot(\alpha_n \cdot \coind{\mathcal{G}_{n-1}:\mathcal{H}_{p_n}} + \beta_n \cdot \coind{\mathcal{G}_{n-1}:\mathcal{G}_{n-1}-C_{\mathcal{G}_{n-1}}} + \coind{\mathcal{G}_{n-1}:\mathcal{K}_n})}
    \\ &\ge \frac{1}{2^{n+1}\cdot \coind{\mathcal{G}_0:\mathcal{H}_{p_n}}},
    \end{align*}
    as required. 

    \item Finally, taking the limit $n\rightarrow \infty$, for every $k$,
    
    \[
    \liminf _{n \rightarrow \infty} \frac{\log_{p_k} \abs{\mathrm{Tor}_{p_k}(G_n^{\mathrm{ab}})}}{[G:G_n]}\ge \lim_{n\rightarrow \infty}  \frac{\prod_{j=k+1}^n\left (1-\frac{1}{2^j}\right)}{2^{k+1}\cdot \coind{\mathcal{G}_0:\mathcal{H}_{p_k}}}>0.  
    \]
    \end{enumerate}
\end{proof}

\bibliographystyle{plain}

\vspace{1cm}

(D. Ascari) \textsc{Department of Mathematics, University of the Basque Country, Barrio Sarriena, Leioa, 48940, Spain}

\emph{Email address:} \texttt{ascari.maths@gmail.com} \par \smallskip
\vspace{0.5cm}
(J. Fruchter) \textsc{Mathematisches Institut, Rheinische Friedrich-Wilhelms-Universit\"at Bonn, Endenicher Allee 60, 53115 Bonn, Germany}

\emph{Email address:} \texttt{fruchter@math.uni-bonn.de}
\end{document}